\documentclass[12pt,reqno]{amsart}
\topmargin=-0.5cm
\evensidemargin=0cm
\oddsidemargin=0cm
\textwidth=15.4cm
\textheight=21.8cm

\usepackage{hyperref}
\usepackage{amsmath}
 \usepackage{bm,amsthm,amsfonts,latexsym,amssymb,commath} 

\usepackage{graphicx,float,multicol}

\newtheorem{lemma}{\bf{Lemma} }[section]
\newtheorem{proposition}{\bf{Proposition}}[section]
\newtheorem{theorem}{\bf{Theorem}}[section]
\newtheorem{remark}{\sc{Remark} }[section]
\newtheorem{definition}{\sc{Definition} }[section]
\newtheorem{corollary}{\bf{Corollary} }[section]

\usepackage{siunitx}

\begin{document}

 \title[Compressible NSF flows at steady-state]{Compressible Navier--Stokes--Fourier flows at steady-state}

\author{Luisa Consiglieri}
\thanks{Dedicated to my coauthor and beloved father Victor Consiglieri.}
\address{Luisa Consiglieri, Independent Researcher Professor, European Union}
\urladdr{\href{http://sites.google.com/site/luisaconsiglieri}{http://sites.google.com/site/luisaconsiglieri}}

\begin{abstract} 
The heat conducting compressible viscous flows are governed by the Navier--Stokes--Fourier (NSF) system.
In this paper, we study the NSF system accomplished by 
the Newton law of cooling for the heat transfer at the boundary. On one part of the boundary, we consider
the Navier slip boundary condition, while in the remaining part the inlet and outlet occur.
The existence of a weak solution is proved via a new fixed point argument.  
With this new approach, the weak solvability is possible in Lipschitz domains, by making recourse to \(L^q\)-Neumann problems with \(q>n\).
Thus,  standard existence results can be applied to auxiliary problems and the claim follows by compactness techniques.
Quantitative estimates are established.
\end{abstract}
\keywords{Compressible Navier--Stokes--Fourier system;  Navier slip boundary conditions; Newton law of cooling;
 inlet/outlet flows; Helmholtz decomposition.}

\subjclass[2010]{Primary: 76N06, 80A19; Secondary: 35Q35, 35Q79,  35R05, 35B45.}

\maketitle

\section{Introduction}

The heat conductive flows are described by a
coupled system consisting of the equations of continuity, motion and energy.
The study of compressible flows depends on the knowledge of solving the continuity equation, 
because this equation has its shortcomings.
 We refer to \cite{bveiga87} for the existence of stationary solutions
if the transport coefficients are, at least, of class \(W^{2,p}(\Omega)\) with \(p>n\).

Several works deal with barotropic flows, where   the pressure is a function of the density only.
 To cover the physical point of view, namely,  the adiabatic exponent \(\gamma=5/3\) for the monoatomic gases
or \(\gamma=7/5\) for the diatomic gases
at ordinary temperature \SIrange{150}{600}{\kelvin},
the imposed assumption on the pressure has being studied in function of the adiabatic exponent
\(\gamma\). To deal with this,
 the renormalized bounded energy weak solutions, in the context of the theory introduced by P.L. Lions \cite{plions},
 are proved for \(\gamma \geq 5/3\) if \(n=3\). Since then the adiabatic exponent is becoming
 realistic. In \cite{frehse-w}, the renormalized bounded energy weak solutions are proved
  under the assumption that the adiabatic exponent satisfies \(\gamma> 4/3\).
 We refer the existence of renormalized weak solutions
  for \(\gamma> (3+\sqrt{41})/8\) to \cite{brezina}, for the flows powered by volume potential forces
 in a rectangular domain with periodic boundary conditions,
 and recently, for \(\gamma>1\) to \cite{plotni-w}, in a bounded domain with  no-slip boundary condition.
For a general case, the existence of a fixed point to the Navier--Stokes system is applied in  \cite{valli}
by using the Schauder theorem under smallness of the \(H^3\)-norm for the velocity field if
providing the system by smooth coefficients. The higher order derivatives are essential in establishing
the estimate of div\,\(\mathbf{u}\).
We remind that a fluid that flows at low velocity is described by the Stokes equations
 and not by the Navier--Stokes equations.

Nonisothermal steady state studies are well known and there exists a vast literature under the Dirichlet condition, for instance,
 on optimal control of low Mach number \cite{imanu}
and on uniqueness \cite{padula} and the literature cited therein.
 The better regularity of solutions by introducing the effective viscous flux 
 \(G=p-(2\mu +\lambda)\mbox{div}\,\mathbf{u}\) is only possible under constant viscosities \(\mu\) and \(\lambda\)
(see \cite{frehse-w},  and the references therein). With this assumption, the authors in \cite{mucha}
prove the existence of weak solutions by replacing,  in NSF system, the energy equation by
the total energy equation. This new system has the particularity of adding the equations,
the pressure and the dissipation disappear, in the establishment of the crucial estimates.

Here, we consider the transport coefficients as temperature and spatial dependent. 
The behavior of the transport coefficients do not allow  standard techniques \cite{sarka2009} as, for instance,
the use of either the above \(G\) or the inverse of the Stokes operator.

The inhomogeneous boundary value problems are, in contrast, less common.  We refer to \cite{plotni} to the
existence of continuous strong solutions to NSF problem under the assumptions
that the Reynolds number  and the inverse viscosity ratio are small and the Mach number Ma\,\(\ll 1\).

The study of the NSF system that the source/sink is the heat transfer at the boundary,
 which is given by the Newton law of cooling, 
 can be applicable to the physical situations such that come from  biomedical engineering (as, for instance,
thermal ablation for the treatment of thyroid nodules \cite{chung,radz}) as well as geological engineering
(as, for instance, the natural gas flow in wells at the region that a single phase occurs).

\textit{A priori} estimates are the core in a fixed point argument.
 However, they are usually deduced from  the boundedness propriety of the operators. 
Then, there exist a universal constant that is abstract, that is, it does not reflect the data dependency.
 To fill this gap, additional attention is payed in the determination of quantitative estimates  in which the dependence
 on the data is explicit.

The outline of this paper is as follows.
Next section is concerned for modeling of the problem under study and the description of the model itself.
Section \ref{smain} is devoted to the mathematical framework, the establishment of the data assumptions,
and the statement of the main theorems.
In Section \ref{strat}, we delineate the fixed point argument.
The following sections (Sections \ref{sZO}, \ref{sdens} and \ref{sSOLA}) concentrate on the wellposedness of three auxiliary problems, namely
a Dirichlet--Navier problem for the velocity field, a inlet/outlet problem for the density scalar and a Dirichlet--Robin problem for the temperature.
The remaining sections (Sections \ref{smain1} and \ref{smain2}) are devoted to the proofs of the main theorems,
respectively, Theorems \ref{main} and \ref{main2}.

\section{Statement of the problem}
\label{stt} 

Let \( \Omega\)  be a bounded domain (connected open set) of  \( \mathbb R^n\), \( (n= 2,3)\),  with Lipschitz boundary.
The boundary \(\partial\Omega\) consists of three pairwise disjoint relatively open \( (n-1)\)-dimensional submanifolds,
 \( \Gamma_\mathrm{in}\), \(\Gamma_\mathrm{out}\) and \( \Gamma\),  with positive Lebesgue measures, 
 whose verify 
\[
\mathrm{cl}(\Gamma_\mathrm{in}) \cup \mathrm{cl}(\Gamma_\mathrm{out}) \cup
\mathrm{cl}(\Gamma)=\partial\Omega,
\]
where cl stands for the set closure.

The heat conducting fluid at steady-state is governed by the Navier--Stokes--Fourier equations 
\begin{align}
 \nabla\cdot(\rho\mathbf{u})&= 0\label{mass}\\
\rho (\mathbf{u}\cdot\nabla)\mathbf{u} -\nabla\cdot\sigma &=\rho  \mathbf{g}\label{motion}\\
\label{heateqs}
\rho \mathbf{u}\cdot\nabla e-\nabla\cdot (k(\theta)\nabla \theta) &=\sigma:D\mathbf{u}  \mbox{ in } \Omega .
\end{align}
Here, the unknown functions are the density \(\rho\), the  velocity field \(\mathbf{u}\), and the 
 specific internal energy  \( e\). 
 We denote \( \zeta:\varsigma=\zeta_{ij}\varsigma_{ij}\)  taking into account the convention on
implicit summation over repeated indices.
The gravitational force \(\mathbf{g}\) and the  dissipation  \( \sigma:D\mathbf{u}\) are negligible.
 Notice that the neglecting the external force fields does not imply that the fluid is at rest. 
Indeed, the fluid flow is driven both by inlet and outlet flows and  by heat transfer on the boundary.

 In the case of ideal gases, the  specific internal energy  \( e\)
  is related with the absolute temperature \(\theta\) by the linear relationship \( e = c_v \theta,\)
where \( c_v\) denotes the specific heat capacity of the fluid at constant volume.
Thus, the energy equation \eqref{heateqs} can be written in terms of the temperature.
Assuming that  the thermal conductivity  \( k\) is  a function  dependent on both temperature and space variable, 
the smoothness of the temperature depends on this coefficient.

The Cauchy stress tensor \(\sigma\), which is temperature dependent, obeys the constitutive law 
\begin{equation}\label{diff}
\sigma =-p \mathsf{I}+\mu(\theta)D\mathbf{u}+\lambda(\theta) {\rm tr}(D\mathbf{u}) \mathsf{I},
\quad{\rm tr}(D\mathbf{u})=\mathsf{I}:D\mathbf{u}= \nabla\cdot\mathbf{u},
\end{equation}
where \(\mathsf{I}\) denotes the identity (\(n\times n\))-matrix, 
 \( D=(\nabla+\nabla^T)/2\) the symmetric gradient,
and \( \mu\) and \( \lambda\) are the viscosity coefficients  in 
accordance with the second law of thermodynamics
\begin{equation}\label{mu}
\mu(\theta)>0,\quad \nu(\theta):=\lambda(\theta)+\mu(\theta)/n\geq 0,
\end{equation} 
with \(\nu\) denoting the bulk (or volume) viscosity  and \(\mu/2\) being the shear (or dynamic) viscosity.

 The  pressure  \( p\) in the case of ideal gases obeys to the Boyle--Marriotte law
 \begin{equation}\label{boyle}
 p= R_\mathrm{specific}\rho\theta 
\end{equation}
where \( R_\mathrm{specific} =R/M\) is the specific gas constant, with 
 \(R= \SI{8.314}{\joule\per\mole\per\kelvin}\) being  the gas constant and \(M\) denoting the molar mass. 
 
 To understand the range of values we are talking to about, we exemplify some well known values for the dry air.
 For the air (assumed to be at the atmospheric pressure \(p=\SI{101,325}{\kilo\pascal}\)), 
 the molar mass of dry air is \(M=\SI{28.96}{\kilogram\per\kilo\mole}\) at temperature \(\theta=\SI{298.15}{\kelvin}\)
 (\(=\SI{25}{\celsius}\)), then
 the density \(\rho =\SI{1.184}{\kilogram\per\cubic\metre}\). Thus, we have
\(R_\mathrm{specific}=\SI{287}{\joule\per\kilogram\per\kelvin}\).
The dry air  can be assumed as diatomic,  then \(c_v =5R/2\).
The dynamic viscosity \(\mu/2=\SI{0.018}{\milli\pascal\second}\) and the bulk viscosity  \(\nu = 0.8 \mu/2\) \cite{gu-ubachs}.
Similar values are known for O\(_2\)  (see Table 1).
 \begin{table}[h]\label{table1}
\caption{Parameters at the atmospheric pressure \cite{kmn,lae}}
 \begin{tabular}{|c|c|c|c|c|}
\hline 
\(\theta\) &  \(\mu/2\) (Air) & \(\mu/2\) (O\(_2\)) &  \(k\)   (Air) 
&  \(k\) (O\(_2\))  \\ 
\([\si{\kelvin}]\)
&  \( [10^{-5}\si{\pascal\second}]\)  &  \( [10^{-5}\si{\pascal\second}]\)
&  \( [10^{-2}\si{\watt\per\metre\per\kelvin}]\)& \( [10^{-2}\si{\watt\per\metre\per\kelvin}]\)
 \\
\hline 
100 &0.7 & 0.8 &0.9  & 1.0 \\  
200 &1.3 & 1.5& 1.8 & 1.8 \\ 
300 &1.9& 2.0 &2.6 &  2.7  \\
500 & 2.7 & 3.0 & 4.0 & 4.3 \\ 
800 &3.7 & 4.2 & 5.7 & 6.6 \\  
1000 &4.3 & 4.9 & 6.8 & 8.0 \\ 
\hline 
\end{tabular} 
 \end{table}
 
The triple point of the air is reached at temperature of \SI{59.75}{\kelvin}  (\(=-\SI{213.4}{\celsius}\))
and a correlated pressure (which value varies from author to author because how it is assumed the air composition).
Thus, a minimum temperature \(\theta_0\) is admissible.
 The values for the velocity, however, range from  that the flow has Reynolds number Re\,\(\ll 1\),
in which case is described by the Stokes equation, until the flow behaves in the turbulent regime of Re\,\(\geq 10^6\).
This means Re\,\(> 6.5\times 10^4 v L\), with \(v\) standing for an average velocity and \(L\) the maximum length
of the cross-section of the domain, in the above conditions. 

We notice that, in this work, we only assume as constant the specific heat capacity.
This assumption is essential to leave the thermal conductivity as space variable dependent, by replacing the specific
internal energy by the temperature  as an unknown to seek.
We leave all the remaining coefficients dependent on the temperature  (see, for instance, Table 1) and on the space variable.

On the Dirichlet boundary  \(\Gamma_{D}=
\mathrm{int}(\mathrm{cl}(\Gamma_\mathrm{in})\cup \mathrm{cl}(\Gamma_\mathrm{out}))\),
 we assume inhomogeneous Dirichlet boundary condition 
\begin{equation}\label{bdD}
\rho=\rho_\infty \quad\mbox{and}\quad \mathbf{u}=\mathbf{u}_{D}.
\end{equation}
This represents both the inflow (\(u_\mathrm{in}:=\mathbf{u}_{D}\cdot\mathbf{n} <0\))
 and outflow (\(u_\mathrm{out}:=\mathbf{u}_{D}\cdot\mathbf{n} >0\)).

On the remaining  boundary \(\Gamma\),  the fluid do not penetrate the solid wall,
and it obeys the Navier slip boundary condition
\begin{equation}\label{ud1}
u_N:=\mathbf{u}\cdot\mathbf{n}=0,\qquad \tau_T=-\gamma (\theta)\mathbf{u}_T,
\end{equation}
where \( \mathbf{n}\) stands for the unit outward vector to  \( \Gamma\),
\(u_N,\mathbf{u}_T\) are the normal and  tangential components
of the velocity vector, respectively,  \(\tau_T=\tau\cdot \mathbf{n}-\tau_N\mathbf{n}\) and \(\tau_N =( \tau\cdot \mathbf{n})\cdot\mathbf{n}\)
 are the  tangential and normal components of the deviator stress tensor \(\tau=\sigma +p\mathsf{I}\), respectively,   and
\(\gamma\) denotes the friction coefficient.

For the heat transfer conditions, it is admissible to assume prescribed temperature in the inlet, that is,
we consider the Dirichlet condition 
\begin{equation}
\theta=\theta_\mathrm{in}\ \mbox{ on }\Gamma_\mathrm{in}.\label{tin}
\end{equation}
For the sake of simplicity, we assume \(\theta_\mathrm{in}\) as a positive constant.
Alternatively, we might assume that \(\theta_\mathrm{in}\) may be extended to a function \(\tilde\theta_\mathrm{in}\in H^1(\Omega).\)

On the boundary \(\Gamma_N=\partial\Omega\setminus \mathrm{cl}(\Gamma_\mathrm{in})\), we assume the Newton law of cooling 
 \begin{equation}\label{newton}
k(\theta)\nabla \theta\cdot\mathbf{n}+h_c  (\theta)  (\theta-\theta_\mathrm{e})=0,
 \end{equation}
 where \( h_c\) denotes the heat transfer coefficient and \( \theta_\mathrm{e}\) represents a given
 (eventually nonconstant) external temperature. This condition is mathematically known as the Robin condition.
The heat source/sink is completely driven from the boundary and we denote
\[
\theta_0 =\left\lbrace
\begin{array}{ll}
\theta_\mathrm{in} & \mbox{on }\Gamma_\mathrm{in}\\
\theta_\mathrm{e} = & \left\lbrace
\begin{array}{l}
\theta_\mathrm{w}  \mbox{ on }\Gamma\\
\theta_\mathrm{out}  \mbox{ on }\Gamma_\mathrm{out}.
\end{array} \right.
\end{array}
\right.
\]

\section{Main Results}
\label{smain}

We assume that \( \Omega \subset\mathbb{R}^n\) is a bounded domain with its boundary \( \partial\Omega\in C^{0,1}\).
The standard notation of Lebesgue and Sobolev spaces is used. 
Let us define the Hilbert spaces
\begin{align*} 
{H}_{\mathrm{in}}^{1}(\Omega)&:=\{ v\in  {H}^{1}(\Omega):\ v=0\mbox{ on }\Gamma_\mathrm{in}\};\\
\mathbf{V}&:=\{\mathbf{v}\in \mathbf{H}^{1}(\Omega):\ 
\mathbf{v}=\mathbf{0}\mbox{ on }\Gamma_D,\ \mathbf{v}\cdot\mathbf{n}=0\mbox{ on }\Gamma\},
\end{align*}
 endowed with the norms, respectively,
\begin{align*}
\|v\|_{1,2,\Omega}&=\left( \|\nabla v\|_{2,\Omega}^2+ \|v\|_{2,\Gamma}^2 \right)^{1/2} ;\\
\|\mathbf{v}\|_\mathbf{V}&= \left(\| D\mathbf{v}\|_{2,\Omega}^2+ \|\mathbf{v}\|_{2,\Gamma}^2  \right)^{1/2} .
\end{align*}
The meaning of the condition   \(\mathbf{v}\cdot\mathbf{n}=0\) on  \(\Gamma\) should be understood as
\[
\langle \mathbf{v}\cdot\mathbf{n} ,v \rangle_{\Gamma}=0,\quad\forall v\in { H}^{1/2}_{00}(\Gamma)=
 \{v\in H^{1/2}(\partial \Omega):\ v=0\quad\mbox{on } \Gamma_D \},
\] 
where the symbol  \(\langle\cdot,\cdot\rangle_\Gamma\) stands for
the duality pairing  \(\langle\cdot,\cdot\rangle_{Y'\times Y}\), where \(Y={ H}^{1/2}_{00}(\Gamma)\).

 \begin{definition}[NSF problem]\label{mainbj}
 We say that the triplet \((\rho,\mathbf{u}, \theta)\) is  a weak solution to the NSF problem if
 it satisfies  the integral identities
 \begin{align}\label{rhow}
\int_\Omega \rho \mathbf{u}\cdot\nabla v\dif{x}=
\int_{\Gamma_D}\rho_\infty\mathbf{u}_D\cdot\mathbf{n} v \dif{s}  ,\quad\forall v\in  W^{1,q'}(\Omega); \\
\int_{\Omega}\rho (\mathbf{u}\cdot\nabla )\mathbf{u}\cdot\mathbf{v}\dif{x} 
+\int_{\Omega}\mu(\theta)D\mathbf{u}:D\mathbf{v} \dif{x} 
+\int_{\Omega} \lambda(\theta)\nabla\cdot\mathbf{u}\nabla\cdot\mathbf{v} \dif{x}  +\nonumber\\
+\int_{\Gamma}\gamma(\theta)\mathbf{u}_T\cdot \mathbf{v}_T\dif{s} 
 =\int_{\Omega}  p\nabla \cdot\mathbf{v}\dif{x} ,\quad\forall \mathbf{v}\in \mathbf{V}; \qquad
\label{motionw}\\
c_v \int_{\Omega} \rho \mathbf{u}\cdot\nabla \theta v \dif{x} + \int_\Omega k(\theta)\nabla \theta\cdot\nabla v \dif{x} 
+\int_{\Gamma_N} h_c(\theta) \theta v\dif{s} = \nonumber\\ = \int_{\Gamma_N} h(\theta) v \dif{s} 
 ,\quad\forall v\in H^{1}_\mathrm{in}(\Omega),\label{heatw}
 \end{align} 
 subject to \eqref{boyle}, \eqref{bdD} and \eqref{tin}.
Here, \(q'\) stands for the conjugate exponent of \(q\), \textit{i.e.} \(1/q'+1/q=1\), and  \(h=h_c\theta_\mathrm{e}\).
 \end{definition}

\begin{remark}
 The variational formulations  \eqref{rhow}-\eqref{heatw} are standardly derived from the NSF system \eqref{mass}-\eqref{heateqs} by
 the Green formula. We point out that the general formula
\[
\langle \rho (\mathbf{u}\cdot\nabla)  \mathbf{u} , \mathbf{v}\rangle =
\langle \tau_T,\mathbf{v} \rangle_\Gamma -\langle  p , \mathbf{v}\cdot \mathbf{n}\rangle_\Gamma
-\int_{\Omega}\sigma :D\mathbf{v}\dif{x} 
 \]
holds for any \( \mathbf{v}\in \mathbf{V}\), under \(\nabla\cdot\sigma \in \mathbf{V}'\) \cite{lap2011}.
\end{remark}
 
The following assertions on the physical parameters appearing in the equations are
assumed:
\begin{description}
\item[(H1)] The viscosities \(\mu\) and \(\lambda\) are Carath\'eodory functions from \(\Omega\times\mathbb{R}\)
into \(\mathbb{R}\) such that
\begin{align}\label{mu1}
\exists \mu_\# >0: &\ \mu(x,e) \geq \mu_\#>0  ; \\  \label{mu2}
\exists \mu^\# >0: &\ \mu(x,e) \leq \mu^\# ; \\
\exists \lambda^\# >0: &\ |\lambda(x,e)| \leq  \lambda^\# ,
\label{nu3}
\end{align}
for a.e. \(x\in\Omega\) and  for all \( e\in\mathbb{R}\).

\item[(H2)] The thermal conductivity \(k\) is a  Carath\'eodory  function from \(\Omega\times\mathbb{R}\)
into \(\mathbb{R}\) such that
\begin{equation}\label{defchi}
\exists k^\#, k_\# >0 :  \quad k_\#\leq k(x,e)\leq k^\#,
\end{equation}
for a.e. \(x\in\Omega\) and  for all \( e\in\mathbb{R}\).

\item[(H3)] The friction coefficient \(\gamma\) is a continuous function from \(\mathbb{R}\)
into \(\mathbb{R}\) such that
\begin{equation}\label{g1}
\exists  \gamma^\#,  \gamma_\# >0 :  \quad \gamma_\#\leq \gamma(e)\leq \gamma^\#,\quad \forall e\in \mathbb{R}.
\end{equation}

\item[(H4)] The heat transfer coefficient \(h_c\) is a  Carath\'eodory  function from \(\Gamma_N\times\mathbb{R}\)
into \(\mathbb{R}\) such that
\begin{align}\label{defhm}
\exists h^\# >0: &\ h_c(e)\leq h^\#\mbox{ a.e. on }\Gamma_N;\\
\exists h_\#>0:& \ h_c(e)\geq h_\# \mbox{ a.e. on }\Gamma; \label{h1}\\
 &\ h_c(e)\geq 0\mbox{ a.e. on }\Gamma_\mathrm{out},\label{hout}
\end{align}
for all \(e\in \mathbb{R}\). Moreover, \(h=\theta_\mathrm{e}h_c\) with the function 
\(\theta_\mathrm{e}\in L^\infty (\Gamma_N)\).

\item[(H5)] The boundary term \(\rho_\infty \mathbf{u}_{D}\cdot\mathbf{n}\in L^q (\Gamma_{D})\), for some \(q>n\),
satisfy  the compatibility condition
\begin{equation}\label{cc}
\int_{\Gamma_{D}}\rho_\infty  \mathbf{u}_{D}\cdot\mathbf{n} \dif{s}=0.
\end{equation}
There exists \(\widetilde{\mathbf{u}}_D\in  \mathbf{H}^{1}(\Omega)\) such that its trace   
\(\widetilde{\mathbf{u}}_D= \mathbf{u}_{D}\) on \(\Gamma_D\) and the normal component of trace vanishes  on \(\Gamma\).
 Indeed, the trace operator has a continuous right inverse operator, 
and in particular it is surjective from \( \mathbf{W}^{1,q}(\Omega)\) onto \( \mathbf{W}^{1-1/q,q}(\partial\Omega)\).
\end{description}

\begin{remark}\label{rsob}
We denote by \(p^*=pn/(n-p)\) the critical Sobolev exponent related to the embedding 
\(W^{1,p}(\Omega)\hookrightarrow L^{p^*}(\Omega)\), if \(p<n\). 
For the sake of simplicity, we also denote by \(p^*\) any real value greater than one, if \(p=n\).
The Rellich--Kondrachov embedding stands for any exponent between \(1\) and  the critical Sobolev exponent \(p^*\).
Notice that the Morrey embedding \(W^{1,q}(\Omega)\hookrightarrow C^{0,1-n/q}(\Omega)\) holds for \(q>n\).
\end{remark}

\begin{remark}\label{meaningful}
All terms are meaningful in the integral identities \eqref{rhow}-\eqref{heatw}. 
The nonlinear terms, the convective term in \eqref{motionw} and the advective term in \eqref{heatw},
are justified in Lemma \ref{lemb}, with \(\mathbf{m}=\rho\mathbf{u}\in \mathbf{L}^{q}(\Omega)\), \(q>n\),  \textit{i.e.}
\(\rho\in L^r(\Omega)\) and \(\mathbf{u}\in \mathbf{H}^1(\Omega)\), with
\begin{equation}\label{defr}
\frac{1}{q} = \frac{1}{r}+\frac{1}{p} \quad\mbox{ if } r = \frac{2p}{p-4} >\frac{2n}{4-n} ,\ 4<p<2^*\ (n=2,3).
\end{equation}
Observe that \(r> 2n/(4-n)\) follows from \(1/n> 1/q = 1/r +1/p >1/r+1/2^*\), while
\(r= 2p/(p-4)\) follows from \( 1/r+1/p=1/q\) altogether to \(1/q+1/p=1/2\).
\end{remark}

Let us state our first main theorem, where the density function is only defined a.e. in \(\Omega\).
\begin{theorem}\label{main}
Let the assumptions (H1)-(H5) be fulfilled. For any \(M\in\mathbb{N}\), there exists a triplet 
 \((\rho,\mathbf{u}, \theta)\) such that
\begin{itemize}
\item \(\rho\) is a measurable function satisfying \(\rho\mathbf{u}\in \mathbf{L}^q(\Omega)\),
  with   \(n<q<n+\varepsilon\), for some \(\varepsilon\) depending on \(\Omega\);  
\item \(\mathbf{u}\in \widetilde{\mathbf{u}}_D +\mathbf{V}\);
\item \(\theta\in (\theta_\mathrm{in}+ H^1_\mathrm{in}(\Omega) ) \cap L^\infty (\Omega) \),
\end{itemize}
which is  a weak solution to the NSF problem, with \eqref{boyle} replaced by
\begin{equation}\label{paux}
p_M=T_M( \rho) R_\mathrm{specific}\theta.
\end{equation}
Here, \(T_M\) stands for the truncation, \textit{i.e.} \(T_M(z)=z\) for \(0\leq z\leq M\) and \(T_M\equiv M\) otherwise.
\end{theorem}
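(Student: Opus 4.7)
My plan is to obtain $(\rho,\mathbf{u},\theta)$ as a fixed point of a compact self-map on a suitable bounded closed convex set via the Schauder theorem, assembling the three auxiliary well-posedness results announced for Sections \ref{sZO}--\ref{sSOLA}. The truncation $p_M=T_M(\rho)R_{\mathrm{specific}}\theta$ is the main simplifying device: once $\theta\in L^{\infty}(\Omega)$ is controlled, the pressure source in \eqref{motionw} is bounded in $L^\infty$ regardless of how singular $\rho$ may be, which is exactly what is needed to run the $L^q$-Neumann estimates at exponent $q>n$ uniformly along the iteration.

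Concretely, I would work in the closed convex set
\[
K=\{(\mathbf{m},\vartheta)\in\mathbf{L}^{q}(\Omega)\times L^{\infty}(\Omega):\ \|\mathbf{m}\|_{q,\Omega}\le R_1,\ 0\le\vartheta\le R_2\},
\]
with $n<q<n+\varepsilon$ for the Lipschitz threshold $\varepsilon>0$ of the $L^q$-Neumann theory, and with $R_1,R_2$ chosen after the a priori estimates are fixed. Given $(\bar{\mathbf{m}},\bar\vartheta)\in K$, the image $\mathcal{T}(\bar{\mathbf{m}},\bar\vartheta)$ is produced by three successive solves: first, the Dirichlet--Navier problem of Section \ref{sZO}, with frozen coefficients $\mu(\bar\vartheta),\lambda(\bar\vartheta),\gamma(\bar\vartheta)$, convective term $(\bar{\mathbf{m}}\cdot\nabla)\mathbf{u}$ justified by Lemma \ref{lemb}, and pressure source $p_M$ built from $(\bar{\mathbf{m}},\bar\vartheta)$, yielding $\mathbf{u}\in\widetilde{\mathbf{u}}_D+\mathbf{V}$ with the extra regularity $\mathbf{u}\in\mathbf{W}^{1,q}(\Omega)$; second, the inlet/outlet transport problem of Section \ref{sdens} for $\rho$ along the flow of $\mathbf{u}$ (which is H\"older continuous by Morrey), producing $\rho\mathbf{u}\in\mathbf{L}^q(\Omega)$; third, the Dirichlet--Robin problem of Section \ref{sSOLA} for $\theta$ with coefficients $k(\bar\vartheta),h_c(\bar\vartheta)$ and advective term $c_v\rho\mathbf{u}\cdot\nabla\theta$. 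Set $\mathcal{T}(\bar{\mathbf{m}},\bar\vartheta)=(\rho\mathbf{u},\theta)$.

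Invariance $\mathcal{T}(K)\subset K$ follows from quantitative estimates in the auxiliary sections. For the velocity, $\mathbf{V}$-coercivity via Korn-type inequalities together with (H1)--(H3) and the bound $\|p_M\|_{\infty,\Omega}\le M R_{\mathrm{specific}}R_2$ delivers an $\mathbf{H}^1$ bound, upgraded to $\mathbf{W}^{1,q}$ by $L^q$-Neumann regularity. For the density, the transport estimate controls $\rho\mathbf{u}$ in terms of $\|\mathbf{u}\|_{\mathbf{W}^{1,q}}$ and the inflow datum from (H5), whose compatibility \eqref{cc} is essential. For the temperature, a Stampacchia truncation using (H4) yields an $L^\infty$ bound; the key is that $\nabla\cdot(\rho\mathbf{u})=0$ renders the advective term antisymmetric modulo boundary contributions whose signs are dictated by \eqref{hout} on $\Gamma_\mathrm{out}$ and whose data are controlled by $\theta_{\mathrm{in}}$ and $\theta_\mathrm{e}$. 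Compactness of $\mathcal{T}$ then comes from Rellich--Kondrachov: $\mathbf{W}^{1,q}\hookrightarrow\mathbf{L}^q$ compactly, and $H^1\cap L^\infty$ embeds compactly in $L^r$ for every finite $r$, with the uniform $L^\infty$ bound upgrading the convergence on the temperature component to the topology of $K$.

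The hard step is the stability of the density solve: continuous dependence of the inlet/outlet transport equation on its velocity field $\mathbf{u}$, together with the $\mathbf{L}^q$ control of $\rho\mathbf{u}$, rests on the $L^q$-Neumann/Helmholtz decomposition at the Lipschitz threshold, and this is precisely the reason the exponent is restricted to $q<n+\varepsilon$ in the statement. Once all three solvers are known to depend continuously on their data in the norms compatible with $K$, Schauder produces a fixed point, and back-substitution into \eqref{rhow}--\eqref{heatw} with $p$ replaced by $p_M$ as in \eqref{paux} yields the claimed triplet.
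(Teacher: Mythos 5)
Your proposal follows the same broad template as the paper (three auxiliary solves plus a Schauder-type fixed point), but two of its load-bearing steps do not hold, and they are precisely the points where the paper takes a different route. The central gap is the claimed upgrade of the Dirichlet--Navier solution to \(\mathbf{W}^{1,q}(\Omega)\), \(q>n\), ``by \(L^q\)-Neumann regularity'', so that \(\mathbf{u}\) is H\"older continuous (Morrey) and the density can be obtained by solving the transport equation along the flow of \(\mathbf{u}\). Under (H1)--(H3) the viscosities are merely bounded Carath\'eodory functions of \((x,\theta)\), the domain is only Lipschitz, and the boundary conditions are of mixed Dirichlet/Navier-slip type; no \(W^{1,q}\)-theory with \(q>n\) is available in this setting, and the paper never claims it: the velocity stays in \(\widetilde{\mathbf{u}}_D+\mathbf{V}\subset \mathbf{H}^1(\Omega)\) throughout (Propositions \ref{pcotau}--\ref{pum}), with the convective term handled only through Lemma \ref{lemb}. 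Consequently the characteristics/transport route to \(\rho\) is unavailable. The paper solves the inlet/outlet problem quite differently (Section \ref{sdens}): \(\rho\) is constructed pointwise so that \(\rho\mathbf{u}=\nabla\psi+\nabla\times\bm{\omega}\), where \(\psi\) solves the scalar Neumann--Laplace problem \eqref{NL1}--\eqref{NL3} with datum \(\rho_\infty\mathbf{u}_D\cdot\mathbf{n}\); the \(L^q\)-Neumann theory on Lipschitz domains is applied to this scalar problem, not to the velocity, and it is the source both of the restriction \(n<q<n+\varepsilon\) and of the bound \eqref{cotamm}, which is independent of any regularity of \(\mathbf{u}\) beyond \(\mathbf{H}^1\). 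The ``transport estimate'' controlling \(\rho\mathbf{u}\) by \(\|\mathbf{u}\|_{\mathbf{W}^{1,q}}\) that you invoke is not a standard estimate and is nowhere justified; note also that in the paper \(\rho\) need not even be bounded (Remark \ref{rho0}).

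There are two further structural problems. First, your set \(K\) has only the two components \((\mathbf{m},\vartheta)\), so when you ``build \(p_M\) from \((\bar{\mathbf{m}},\bar\vartheta)\)'' there is no \(\bar\rho\) to truncate: \(T_M(\rho)\) requires the density, which is produced only inside the iteration. The paper resolves this by carrying the pressure \(\pi\) as a third fixed-point component in \(K_{q,r}\subset \mathbf{L}^q(\Omega)\times H^1(\Omega)\times L^r(\Omega)\), using \eqref{paux} only to define the output pressure, with the bound \eqref{cotapm}. Second, the compactness argument is not closed: besides resting on the unavailable \(\mathbf{W}^{1,q}\) bound, norm compactness into your \(K\) would require convergence of the temperature component in \(L^\infty\), which boundedness in \(H^1\cap L^\infty\) does not provide. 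The paper avoids norm compactness altogether by using the weak-topology (Tychonoff) version of the Schauder theorem (Theorem \ref{fpt}), proving weak sequential continuity of \(\mathcal{T}\) through the continuous-dependence results (Propositions \ref{pum} and \ref{pem}, the latter requiring a strong-convergence argument for \(\nabla\theta_m\)), and obtaining the \(L^\infty\) bound on \(\theta\) a posteriori from the minimum-maximum principle (Proposition \ref{maxmin}). As written, therefore, the proposal's key steps --- velocity regularity, density solve, pressure construction, and compactness --- do not go through.
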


Let us state our second main theorem, where the density function is assumed to have \(L^r\)-regularity, for some \(r>2n/(4-n)\) (\(n=2,3\)).
\begin{theorem}\label{main2}
Under the conditions of Theorem \ref{main}, 
 the NSF problem admits at least one solution in \( {L}^r(\Omega) \times (\widetilde{\mathbf{u}}_D + \mathbf{V})\times H^1(\Omega)\)
if provided by \(\rho\in L^r (\Omega) \) satisfying
\begin{equation}\label{arho}
\|\rho\|_{r,\Omega}\leq \mathcal{R},
\end{equation}
for some positive constant \(\mathcal{R}\) independent on \(M\) and \(r\) verifying \eqref{defr}. 
Moreover, the following quantitative estimates
\begin{align}\label{u1}
\|\mathbf{u}-\widetilde{\mathbf{u}}_D\|_{\mathbf{V}} \leq&
\max\left\lbrace  \frac{n}{(n-1)\mu_\#}, \frac{1}{\gamma_\#}\right\rbrace 
\left(  R_4 |\Omega|^{1/2-1/r} + R_1 \|\widetilde{\mathbf{u}}_D\|_{p,\Omega}  \right.\nonumber \\ & \left.
+ \mu^\#\|D\widetilde{\mathbf{u}}_D\|_{2,\Omega}  +\lambda^\#\|\nabla\cdot\widetilde{\mathbf{u}}_D\|_{2,\Omega}\right) \nonumber \\
&   +\sqrt{\frac{\gamma^\#} {\min\left\lbrace\frac{n-1}{n}\mu_\#,\gamma_\#\right\rbrace} }\|\widetilde{\mathbf{u}}_D\|_{2,\Gamma}; \\
\|\theta\|_{1,2,\Omega} \leq& R_2\label{t1}
   \end{align}
hold, where \(R_1\), \(R_2\) and \(R_4\) are defined in \eqref{cotamm}, \eqref{cotaeg} and \eqref{r4},
respectively.
\end{theorem}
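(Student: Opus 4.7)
The plan is to apply Theorem \ref{main} to produce, for each $M\in\mathbb{N}$, a weak solution $(\rho_M,\mathbf{u}_M,\theta_M)$ with truncated pressure $p_M=T_M(\rho_M)R_\mathrm{specific}\theta_M$, and then pass to the limit as $M\to\infty$ using the standing hypothesis $\|\rho_M\|_{r,\Omega}\leq\mathcal{R}$. First I would derive the $M$-independent estimates \eqref{u1} and \eqref{t1} by testing \eqref{motionw} with $\mathbf{u}_M-\widetilde{\mathbf{u}}_D\in\mathbf{V}$ and \eqref{heatw} with $\theta_M-\theta_\mathrm{in}\in H^1_\mathrm{in}(\Omega)$. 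Coercivity of the diffusive parts is supplied by (H1)--(H4) together with Korn's inequality on $\mathbf{V}$; the pressure term is controlled by H\"older combined with Sobolev embedding and the uniform bound $\|T_M(\rho_M)\|_{r,\Omega}\leq\mathcal{R}$; the convective and advective terms are controlled via Lemma \ref{lemb}, applicable since \eqref{defr} provides $\rho_M\mathbf{u}_M\in\mathbf{L}^q(\Omega)$ with $q>n$.

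By weak compactness I extract (along a subsequence, not relabelled) $\rho_M\rightharpoonup\rho$ in $L^r(\Omega)$, $\mathbf{u}_M\rightharpoonup\mathbf{u}$ in $\mathbf{H}^1(\Omega)$ and $\theta_M\rightharpoonup\theta$ in $H^1(\Omega)$. The Rellich--Kondrachov embedding gives $\mathbf{u}_M\to\mathbf{u}$ and $\theta_M\to\theta$ strongly in $L^p(\Omega)$ for every $p<2^*$ and in $L^2(\Gamma)$, plus pointwise a.e. convergence after a further extraction. The Carath\'eodory structure (H1)--(H4) combined with Lebesgue dominated convergence then delivers strong $L^s$-convergence of $\mu(\theta_M)$, $\lambda(\theta_M)$, $\gamma(\theta_M)$, $k(\theta_M)$ and $h_c(\theta_M)$ for every $s<\infty$. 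Since $1/q=1/r+1/p$ from \eqref{defr}, the weak-strong product yields $\rho_M\mathbf{u}_M\rightharpoonup\rho\mathbf{u}$ in $\mathbf{L}^q(\Omega)$, which suffices to pass to the limit in \eqref{rhow} tested against $v\in W^{1,q'}(\Omega)$.

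The critical step, and the principal obstacle, is the limit of the pressure. Chebyshev's inequality gives $|\{\rho_M>M\}|\leq(\mathcal{R}/M)^r\to 0$ uniformly in $M$, so for each $\varphi\in L^{r'}(\Omega)$,
\[
\left|\int_\Omega\bigl(T_M(\rho_M)-\rho_M\bigr)\varphi\,\dif{x}\right|\leq \mathcal{R}\left(\int_{\{\rho_M>M\}}|\varphi|^{r'}\,\dif{x}\right)^{1/r'}\longrightarrow 0
\]
by absolute continuity of the Lebesgue integral, whence $T_M(\rho_M)\rightharpoonup\rho$ in $L^r(\Omega)$. Coupled with the strong convergence of $\theta_M$ in an appropriate $L^p(\Omega)$ (with $1/r+1/p\leq 1/2$, compatible with $r>2n/(4-n)$ and $p<2^*$), the pressure $T_M(\rho_M)\theta_M$ converges weakly in a space continuously embedded in $L^2(\Omega)$ and therefore pairs with $\nabla\cdot\mathbf{v}\in L^2(\Omega)$ in \eqref{motionw}. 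The convective term is rewritten by integration by parts, using $\nabla\cdot(\rho_M\mathbf{u}_M)=0$ together with $\mathbf{v}=\mathbf{0}$ on $\Gamma_D$ and $\mathbf{v}\cdot\mathbf{n}=0$ on $\Gamma$, as $-\int_\Omega(\rho_M\mathbf{u}_M\otimes\mathbf{u}_M):\nabla\mathbf{v}\,\dif{x}$; since $\rho_M\mathbf{u}_M\rightharpoonup\rho\mathbf{u}$ in $\mathbf{L}^q(\Omega)$ and $\mathbf{u}_M\to\mathbf{u}$ strongly in $\mathbf{L}^p(\Omega)$ with $1/q+1/p=1/2$, the dyadic product converges weakly in $\mathbf{L}^2(\Omega)$. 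The advective term in \eqref{heatw} is handled symmetrically. Finally, the quantitative estimates \eqref{u1}--\eqref{t1} transfer to the limit triple by weak lower semicontinuity of the norms. Without the uniform bound \eqref{arho}, the sequence $T_M(\rho_M)$ could concentrate on vanishing sets and fail to identify with $\rho$, so the hypothesis $\|\rho\|_{r,\Omega}\leq\mathcal{R}$ is essential precisely at this pressure passage.
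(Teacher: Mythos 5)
Your proposal is correct and follows essentially the same route as the paper: uniform-in-$M$ estimates obtained from the hypothesis \eqref{arho} together with \eqref{cotamm}, \eqref{cotaeg} and the bound \eqref{r4} on $p_M$, then weak compactness and passage to the limit identifying $p=\rho R_\mathrm{specific}\theta$ and transferring \eqref{u1}--\eqref{t1} by lower semicontinuity. Your Chebyshev argument for $T_M(\rho_M)\rightharpoonup\rho$ and the weak--strong product treatment of the convective, advective and pressure terms simply make explicit steps the paper leaves terse, so there is no substantive difference in method.
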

  
\begin{remark}
The quantitative estimate \eqref{u1} may be  simplified if, for instance, in the assumption (H5) we assume the existence of 
 \(\widetilde{\mathbf{u}}_D\in  \mathbf{H}^{1}(\Omega)\) having the trace 
\[
\widetilde{\mathbf{u}}_D=\left\{ \begin{array}{ll}
 \mathbf{u}_{D}& \mbox{ on }\Gamma_D\\
\mathbf{0}&\mbox{ on }\Gamma
\end{array}\right.
\] instead.
\end{remark}
 
\section{Strategy}
\label{strat}

Our strategy is based on that the velocity field is not admissible to use for the fixed point argument, because
 the velocity field is not directly measurable and the linear momentum is easier to be physically determined.  

 For fixed \(q>n\) and \(r> 2\), we define  the closed set
 \begin{equation}
   K_{q,r} 
 :=\lbrace \mathbf{m}\in \mathbf{L}^q(\Omega): \ \eqref{defm}\mbox{ holds}\rbrace \times  H^{1}(\Omega) \times  L^r (\Omega)
\label{defv}
 \end{equation}
in the reflexive Banach space
 \(\mathbf{L}^q(\Omega)\times  H^{1}(\Omega)\times L^r (\Omega)\).

 For fixed \(M\in\mathbb{N}\), we build an operator \(\mathcal{T}\)
\begin{align*}
\mathcal{T}:
(\mathbf{m},\xi,\pi)\in   K_{q,r} 
&\mapsto  \mathbf{w}=\mathbf{w}(\mathbf{m},\xi,\pi)\quad\mbox{(Dirichlet--Navier problem)} \\
&\mapsto \mathbf{u}=\mathbf{w}+ \widetilde{\mathbf{u}}_D\\
&\mapsto \rho =\rho (\mathbf{u}) \quad\mbox{(Inlet/outlet problem)} \\
&\mapsto \theta = \theta (\mathbf{m},\xi )\quad (\mbox{Dirichlet--Robin problem)} \\
&\mapsto (\rho\mathbf{u}, \theta, p_M)
\end{align*}
with \(\mathbf{m}\in\mathbf{L}^q(\Omega)\) satisfying
\begin{equation}\label{defm}
\int_\Omega \mathbf{m}\cdot \nabla v \dif{x}=
\int_{\Gamma_D}\rho_\infty\mathbf{u}_D\cdot\mathbf{n} v \dif{s}
,\quad\forall v\in W^{1,q'}(\Omega).
\end{equation} 

Here, we consider three auxiliary problems.
\begin{description}
\item[\textbf{(Dirichlet--Navier problem)}]
The auxiliary velocity \(\mathbf{w}\in\mathbf{V}\) is the unique solution to the Dirichlet--Navier problem defined by
\begin{align}\label{fluidw}
-\int_\Omega \mathbf{m}\otimes \mathbf{w}:\nabla\mathbf{v} \dif{x}+
\int_\Omega \mu(\xi)D\mathbf{w}:D\mathbf{v}\dif{x}+
\int_\Omega\lambda(\xi)\nabla\cdot\mathbf{w}\nabla\cdot\mathbf{v} \dif{x} \nonumber \\
+\int_{\Gamma}\gamma(\xi)\mathbf{w}_T\cdot \mathbf{v}_T\dif{s} =
\int_\Omega \pi\nabla\cdot\mathbf{v} \dif{x}+
\mathcal{G}(\mathbf{m},\xi , \widetilde{\mathbf{u}}_D, \mathbf{v}) ,\ \forall \mathbf{v}\in \mathbf{V},
 \end{align}
 with 
 \begin{align*}
 \mathcal{G}(\mathbf{m},\xi,  \widetilde{\mathbf{u}}_D, \mathbf{v}):=&
 \int_\Omega  \mathbf{m}\otimes \widetilde{\mathbf{u}}_D:\nabla\mathbf{v} \dif{x}
 - \int_{\Gamma}\gamma(\xi)\widetilde{\mathbf{u}}_D\cdot \mathbf{v}_T\dif{s}\\
 & - \int_\Omega \left(
 \mu(\xi)D\widetilde{\mathbf{u}}_D:D\mathbf{v}+\lambda(\xi)\nabla\cdot\widetilde{\mathbf{u}}_D\nabla\cdot\mathbf{v}
 \right)\dif{x}.
 \end{align*}
 
\item[\textbf{(Inlet/outlet problem)}]
The auxiliary density \(\rho\) is a unique solution to the inlet/outlet problem defined by
\begin{equation}\label{syst2}
\int_\Omega \rho \mathbf{u}\cdot \nabla v \dif{x}=
\int_{\Gamma_D}\rho_\infty\mathbf{u}_D\cdot\mathbf{n} v \dif{s},\quad\forall v\in W^{1,q'}(\Omega).
 \end{equation}
 
 \item[\textbf{(Dirichlet--Robin problem)}]
The auxiliary temperature \(\theta-\theta_\mathrm{in} \in H^{1}_\mathrm{in}(\Omega)\) is the unique weak solution to the Dirichlet--Robin problem defined by
\begin{align}
c_v\int_\Omega \mathbf{m}\cdot\nabla\theta v \dif{x}
+&\int_\Omega k(\xi)\nabla \theta \cdot \nabla v \dif{x}  \nonumber \\
 &+\int_{\Gamma_N} h_c(\xi) \theta v  \dif{s}
= \int_{\Gamma_N} h(\xi) v \dif{s},\quad\forall v\in H^{1}_\mathrm{in}(\Omega).\label{newtonpb}
\end{align}
\end{description}
Finally, the auxiliary   pressure is given by \eqref{paux}.

Let us  establish some properties of the linearized convective and advective terms, which are the key points of this paper.
\begin{lemma}\label{lemb}
Let  \( \Omega \subset\mathbb{R}^n\) be a bounded Lipschitz domain. 
For each \( \mathbf{m}\in \mathbf{L}^{q}(\Omega)\), \(q>n\), which verifies \eqref{defm}, the following functionals 
 are well defined and  continuous:
\begin{description}
\item[(convective)] 
\(
\mathbf{u}\in \mathbf{H}^1(\Omega) \mapsto \langle B\mathbf{u},\mathbf{v}\rangle:=
\int_\Omega \mathbf{m}\otimes \mathbf{u}:\nabla\mathbf{v} \dif{x},\) for all \(\mathbf{v}\in \mathbf{V}\).
Moreover, \(B\) is  skew-symmetric in the sense
\begin{equation}\label{skew}
\langle B\mathbf{u}, \mathbf{v}\rangle =-\int_\Omega (\mathbf{m}\cdot \nabla) \mathbf{u}\cdot\mathbf{v} \dif{x}\quad\forall 
\mathbf{u}\in \mathbf{H}^1(\Omega) \ \forall \mathbf{v}\in \mathbf{V}
\end{equation}
 and, in particular, 
\(  \langle B\mathbf{v},\mathbf{v}\rangle=0\) holds for all \(\mathbf{v}\in\mathbf{V}\).
\item[(advective)] \(
e\in H^1(\Omega)\mapsto\int_\Omega \mathbf{m}\cdot\nabla e v \dif{x},\)
for all \(v \in H^1(\Omega).\) Assuming (H5), the relation
\begin{equation}\label{advt}
\int_\Omega \mathbf{m}\cdot\nabla e v \dif{x} =
\int_{\Gamma_D}\rho_\infty\mathbf{u}_D\cdot\mathbf{n} ev \dif{s} -\int_\Omega \mathbf{m}\cdot\nabla v e \dif{x}
\end{equation}
holds for any \(e ,v\in H^1(\Omega).\)
\end{description}
\end{lemma}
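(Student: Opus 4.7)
The plan is to establish well-definedness and continuity of both functionals by a Hölder--Sobolev estimate, and then to derive the identities \eqref{skew} and \eqref{advt} by testing the weak divergence relation \eqref{defm} with the pointwise products $\mathbf{u}\cdot\mathbf{v}$ and $ev$, respectively.

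For continuity, I would fix the Hölder exponent $p=2q/(q-2)$ coming from the compatibility $1/q+1/p+1/2=1$. Since $q>n$, a direct check gives $p<2^{*}$, so the Rellich--Kondrachov embedding $H^{1}(\Omega)\hookrightarrow L^{p}(\Omega)$ together with Hölder's inequality yields
\[
|\langle B\mathbf{u},\mathbf{v}\rangle|\leq \|\mathbf{m}\|_{q,\Omega}\,\|\mathbf{u}\|_{p,\Omega}\,\|\nabla\mathbf{v}\|_{2,\Omega}\leq C\,\|\mathbf{m}\|_{q,\Omega}\,\|\mathbf{u}\|_{1,2,\Omega}\,\|\mathbf{v}\|_{\mathbf{V}},
\]
and the advective functional is handled identically by placing $\nabla e\in L^{2}$, $\mathbf{m}\in L^{q}$ and $v\in L^{p}$. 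This already gives the first part of each item.

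For the skew-symmetry \eqref{skew} I would use the pointwise a.e.\ identity
\[
\mathbf{m}\otimes\mathbf{u}:\nabla\mathbf{v}+(\mathbf{m}\cdot\nabla)\mathbf{u}\cdot\mathbf{v}=\mathbf{m}\cdot\nabla(\mathbf{u}\cdot\mathbf{v}),
\]
together with \eqref{defm} applied to the scalar test function $v:=\mathbf{u}\cdot\mathbf{v}$. Two things must be checked: that $\mathbf{u}\cdot\mathbf{v}$ lies in $W^{1,q'}(\Omega)$, and that its trace vanishes on $\Gamma_{D}$. The first follows because each summand of $\nabla(\mathbf{u}\cdot\mathbf{v})$ is a product of an $L^{2}$ gradient factor and an $L^{p}$ factor, hence sits in $L^{2p/(p+2)}(\Omega)$, and the inequality $2p/(p+2)\geq q'$ is equivalent to $p\geq 2q/(q-2)$, which holds at equality by the choice above; the product $\mathbf{u}\cdot\mathbf{v}$ itself is in $L^{p/2}\subset L^{q'}(\Omega)$ for the same reason. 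The second is immediate from $\mathbf{v}\in\mathbf{V}$, giving $\mathbf{v}|_{\Gamma_{D}}=0$. Consequently the right-hand side of \eqref{defm} vanishes, so $\int_{\Omega}\mathbf{m}\cdot\nabla(\mathbf{u}\cdot\mathbf{v})\,dx=0$, which is exactly \eqref{skew}; the case $\mathbf{u}=\mathbf{v}$ yields $\langle B\mathbf{v},\mathbf{v}\rangle=0$.

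For \eqref{advt} I would repeat the same device with $ev$ in place of $\mathbf{u}\cdot\mathbf{v}$; membership $ev\in W^{1,q'}(\Omega)$ follows by the same exponent count, but now neither $e$ nor $v$ is required to vanish on $\Gamma_{D}$, so the right-hand side of \eqref{defm} retains the genuine boundary integral $\int_{\Gamma_{D}}\rho_{\infty}\mathbf{u}_{D}\cdot\mathbf{n}\,ev\,ds$, and splitting $\mathbf{m}\cdot\nabla(ev)=(\mathbf{m}\cdot\nabla e)v+e\,(\mathbf{m}\cdot\nabla v)$ then rearranging gives precisely \eqref{advt}. The only real technical obstacle is this exponent bookkeeping, i.e.\ verifying that the test products $\mathbf{u}\cdot\mathbf{v}$ and $ev$ actually belong to $W^{1,q'}(\Omega)$, and it is precisely the strict inequality $q>n$ (equivalently $p<2^{*}$) that makes the argument go through in both cases.
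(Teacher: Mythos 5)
Your proposal is correct and follows essentially the same route as the paper: Hölder with the exponents $1/q+1/p=1/2$, $p<2^{*}$ (i.e.\ $q>n$) plus the Rellich--Kondrachov embedding for continuity, and then testing \eqref{defm} with the products $\mathbf{u}\cdot\mathbf{v}$ and $ev$ (noting $\mathbf{v}=\mathbf{0}$ on $\Gamma_D$ kills the boundary term in the convective case). Your explicit exponent bookkeeping for $\mathbf{u}\cdot\mathbf{v},\,ev\in W^{1,q'}(\Omega)$ simply spells out what the paper asserts, with the paper making the well-posedness of the boundary integral in \eqref{advt} explicit via the trace embedding $H^1(\Omega)\hookrightarrow L^{2(n-1)/(n-2)}(\partial\Omega)$.
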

\begin{proof}
The wellposedness of each functional is consequence of the H\"older inequality, with exponents \(q\), \(p\) and \(2\) such that
\[
 \frac{1}{2^*}<\frac{1}{p}=\frac{1}{2}-\frac{1}{q} 
\Leftrightarrow q>n,
\]
and the Rellich--Kondrachov embedding \( H^1(\Omega)\hookrightarrow\hookrightarrow L^{p}(\Omega) \) (cf. Remark \ref{rsob}).

The skew symmetry of \(B\), \eqref{skew}, follows from the relation
\[ 
\langle B\mathbf{u}, \mathbf{v}\rangle +\int_\Omega (\mathbf{m}\cdot \nabla) \mathbf{u}\cdot\mathbf{v} \dif{x}
=\int_\Omega \mathbf{m}\cdot \nabla ( \mathbf{u}\cdot\mathbf{v}) \dif{x}
= 
\int_{\Gamma_D}\rho_\infty\mathbf{u}_D\cdot\mathbf{n} ( \mathbf{u}\cdot\mathbf{v}) \dif{s}
\] 
by using \eqref{defm} with \(  \mathbf{u}\cdot\mathbf{v}\in W^{1,q'}(\Omega)\),  \(1<q'<n/(n-1)\).

In \eqref{advt}, the wellposedness of the boundary integral follows from the H\"older inequality, with exponents
\[
\frac{1}{t} + 2 \frac{n-2}{2(n-1)}= 1 \Leftrightarrow t = \left\{ \begin{array}{ll}
n-1&\mbox{if } n=3,4\\
\mbox{arbitrary} &\mbox{if } n=2
\end{array}\right. 
\]
and considering the embedding \(H^1(\Omega) \hookrightarrow L^{2(n-1)/(n-2)}(\partial\Omega)\) and 
\(\rho_\infty\mathbf{u}_D\cdot\mathbf{n} \in L^q(\Gamma_D)\), where \(q>t\). 
\end{proof}

\section{Wellposedness of the Dirichlet--Navier problem}
\label{sZO}

The following properties are well known in the fluid mechanics theory. However, the quantitative estimate is essential in the
fixed point argument and we will fix it.
\begin{proposition}\label{pcotau}
Let the assumptions (H1), (H3) and (H5) be fulfilled.
  For each  (\(\mathbf{m},\xi,\pi)\in  K_{q,2} \), with \(q> n\), let
 \(\mathbf{w}\in \mathbf{V}\) be a solution to the  problem \eqref{fluidw}.
 Then, the following quantitative estimate
\begin{align}\label{cotau}
\min\left\lbrace\frac{n-1}{n}\mu_\#,\gamma_\#\right\rbrace \|\mathbf{w}\|_{\mathbf{V}}^2\leq \frac{n}{(n-1)\mu_\#}
\left( \|\pi\|_{2,\Omega}+\|\mathbf{m}\|_{q,\Omega} \|\widetilde{\mathbf{u}}_D\|_{p,\Omega} \right.\nonumber \\ \left.
+ \mu^\#\|D\widetilde{\mathbf{u}}_D\|_{2,\Omega} 
  +\lambda^\#\|\nabla\cdot\widetilde{\mathbf{u}}_D\|_{2,\Omega}\right)^2
   +\gamma^\#\|\widetilde{\mathbf{u}}_D\|_{2,\Gamma}^2
   \end{align}
holds, with  \(1\leq p< 2^*\) being such that \(1/q+1/p=1/2\).
\end{proposition}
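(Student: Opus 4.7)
The plan is to test the Dirichlet--Navier identity \eqref{fluidw} against the solution itself, $\mathbf{v}=\mathbf{w}\in\mathbf{V}$, and use the skew-symmetry from Lemma~\ref{lemb} to kill the convective contribution: $\langle B\mathbf{w},\mathbf{w}\rangle=-\int_\Omega \mathbf{m}\otimes\mathbf{w}:\nabla\mathbf{w}\dif{x}=0$. The resulting energy identity reads
\[
\int_\Omega\mu(\xi)|D\mathbf{w}|^2\dif{x}+\int_\Omega\lambda(\xi)(\nabla\cdot\mathbf{w})^2\dif{x}+\int_\Gamma\gamma(\xi)|\mathbf{w}_T|^2\dif{s}=\int_\Omega\pi\nabla\cdot\mathbf{w}\dif{x}+\mathcal{G}(\mathbf{m},\xi,\widetilde{\mathbf{u}}_D,\mathbf{w}),
\]
where on $\Gamma$ one has $\mathbf{w}_T=\mathbf{w}$ since $\mathbf{w}\cdot\mathbf{n}=0$. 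The strategy is to bound the LHS below coercively in $\|\mathbf{w}\|_\mathbf{V}^2$ and the RHS above linearly in $\|D\mathbf{w}\|_{2,\Omega}$ and $\|\mathbf{w}\|_{2,\Gamma}$, and then conclude through Young's inequality.

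For the coercive lower bound, I would use the tensorial splitting
\[
\mu|D\mathbf{w}|^2+\lambda(\nabla\cdot\mathbf{w})^2 = \mu\bigl|D\mathbf{w}-\tfrac{1}{n}(\nabla\cdot\mathbf{w})\mathsf{I}\bigr|^2+\nu(\nabla\cdot\mathbf{w})^2,
\]
with $\nu=\lambda+\mu/n\geq 0$ from the thermodynamic compatibility \eqref{mu}; both summands are non-negative. Combining $\mu\geq\mu_\#$, the identity $|D\mathbf{w}-\tfrac{1}{n}(\nabla\cdot\mathbf{w})\mathsf{I}|^2=|D\mathbf{w}|^2-\tfrac{1}{n}(\nabla\cdot\mathbf{w})^2$, and the pointwise Cauchy--Schwarz bound $(\nabla\cdot\mathbf{w})^2\leq n|D\mathbf{w}|^2$ (from $\mathsf{I}:D\mathbf{w}=\nabla\cdot\mathbf{w}$), the viscous part is estimated from below by $\tfrac{n-1}{n}\mu_\#\|D\mathbf{w}\|_{2,\Omega}^2$. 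The boundary part is $\geq\gamma_\#\|\mathbf{w}\|_{2,\Gamma}^2$ via (H3). The elementary rule $\min\{A_1,A_2\}(\alpha_1+\alpha_2)\leq A_1\alpha_1+A_2\alpha_2$ upgrades this to the $\min\{\tfrac{n-1}{n}\mu_\#,\gamma_\#\}\|\mathbf{w}\|_\mathbf{V}^2$ bound on the LHS.

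For the right-hand side, H\"older's inequality together with Lemma~\ref{lemb} handles each piece: the pressure term by $\|\pi\|_{2,\Omega}\|\nabla\cdot\mathbf{w}\|_{2,\Omega}$; the $\mathbf{m}$-driven piece of $\mathcal{G}$ by $\|\mathbf{m}\|_{q,\Omega}\|\widetilde{\mathbf{u}}_D\|_{p,\Omega}\|\nabla\mathbf{w}\|_{2,\Omega}$; and the viscous cross-terms by $\mu^\#\|D\widetilde{\mathbf{u}}_D\|_{2,\Omega}\|D\mathbf{w}\|_{2,\Omega}$ and $\lambda^\#\|\nabla\cdot\widetilde{\mathbf{u}}_D\|_{2,\Omega}\|\nabla\cdot\mathbf{w}\|_{2,\Omega}$. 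Invoking $\|\nabla\cdot\mathbf{w}\|_{2,\Omega}\leq\sqrt{n}\|D\mathbf{w}\|_{2,\Omega}$ expresses every volume contribution as (data factor)$\cdot\|D\mathbf{w}\|_{2,\Omega}$, with the parenthetical of \eqref{cotau} playing the role of the aggregate data factor $B$. For the boundary cross-term, a weighted Cauchy--Schwarz, $|\int_\Gamma\gamma\widetilde{\mathbf{u}}_D\cdot\mathbf{w}_T|\leq(\int_\Gamma\gamma|\widetilde{\mathbf{u}}_D|^2)^{1/2}(\int_\Gamma\gamma|\mathbf{w}_T|^2)^{1/2}$, followed by Young with $\epsilon=1$, absorbs half of $\int_\Gamma\gamma|\mathbf{w}_T|^2$ and leaves a residue of precisely $\gamma^\#\|\widetilde{\mathbf{u}}_D\|_{2,\Gamma}^2$ after doubling, matching the second summand on the RHS of \eqref{cotau}. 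Symmetrically, Young's on $B\|D\mathbf{w}\|_{2,\Omega}$ with parameter $\tfrac{(n-1)\mu_\#}{2n}$ absorbs half of $\tfrac{n-1}{n}\mu_\#\|D\mathbf{w}\|_{2,\Omega}^2$ and produces the tail $\tfrac{n}{(n-1)\mu_\#}B^2$ after doubling.

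The main obstacle I expect is the coercivity step: since $\lambda$ is only constrained by $|\lambda|\leq\lambda^\#$ and the worst case $\nu=0$ (i.e.\ $\lambda=-\mu/n$) is admissible, the sharp constant $(n-1)/n$ must be squeezed out by carefully combining the trace-free/trace decomposition with $(\nabla\cdot\mathbf{w})^2\leq n|D\mathbf{w}|^2$. The secondary challenge is the bookkeeping in the Young's step: one has to match parameters so that, after doubling, the LHS coefficients $\tfrac{n-1}{n}\mu_\#$ and $\gamma_\#$ come out intact while the residues are exactly $\tfrac{n}{(n-1)\mu_\#}B^2$ and $\gamma^\#\|\widetilde{\mathbf{u}}_D\|_{2,\Gamma}^2$, with no slack remaining.
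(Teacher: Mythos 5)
You follow the paper's proof almost step for step: testing \eqref{fluidw} with $\mathbf{v}=\mathbf{w}$, cancelling the convective term by the skew-symmetry of Lemma \ref{lemb}, exploiting $n\lambda(\xi)+\mu(\xi)\geq 0$ (your trace/trace-free splitting is exactly this), bounding the boundary integral below via \eqref{g1}, and your H\"older--Young bookkeeping (Young with parameter $\tfrac{(n-1)\mu_\#}{2n}$ for the volume terms, weighted Cauchy--Schwarz plus Young on $\Gamma$, then doubling, using $\mathbf{w}_T=\mathbf{w}$ on $\Gamma$) reproduces the paper's absorption scheme and the final constants on the boundary part.

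The genuine gap is in the coercivity constant, the very step you flag as the main obstacle but then claim to resolve. In the worst admissible case $\nu=0$ your splitting leaves $\mu(\xi)\bigl(|D\mathbf{w}|^2-\tfrac1n(\nabla\cdot\mathbf{w})^2\bigr)$, and the Cauchy--Schwarz bound $(\nabla\cdot\mathbf{w})^2\leq n|D\mathbf{w}|^2$ only shows this quantity is nonnegative; to obtain the lower bound $\tfrac{n-1}{n}\mu_\#|D\mathbf{w}|^2$ you would need $(\nabla\cdot\mathbf{w})^2\leq|D\mathbf{w}|^2$, which is false pointwise (if $D\mathbf{w}$ is a multiple of $\mathsf{I}$, then $(\nabla\cdot\mathbf{w})^2=n|D\mathbf{w}|^2$). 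So, as written, your chain does not produce the constant in \eqref{cotau}; what is really needed is an integral (Korn-type) inequality on $\mathbf{V}$ relating $\|\nabla\cdot\mathbf{w}\|_{2,\Omega}$, $\|\nabla\mathbf{w}\|_{2,\Omega}$ and $\|D\mathbf{w}\|_{2,\Omega}$. This is also precisely the point where the paper itself is terse: it invokes $(\nabla\cdot\mathbf{w})^2\leq|\nabla\mathbf{w}|^2$ and passes directly from $\int_\Omega\mu(\xi)\bigl(|D\mathbf{w}|^2-\tfrac1n|\nabla\cdot\mathbf{w}|^2\bigr)\dif{x}$ to $\tfrac{n-1}{n}\mu_\#\|D\mathbf{w}\|_{2,\Omega}^2$, so you identified the crux correctly but did not close it. A secondary mismatch: your conversions $\|\nabla\cdot\mathbf{w}\|_{2,\Omega}\leq\sqrt{n}\,\|D\mathbf{w}\|_{2,\Omega}$ (and the $\mathbf{m}$-term's natural $\|\nabla\mathbf{w}\|_{2,\Omega}$) insert factors $\sqrt{n}$, respectively a Korn constant, in front of $\|\pi\|_{2,\Omega}$, $\lambda^\#\|\nabla\cdot\widetilde{\mathbf{u}}_D\|_{2,\Omega}$ and $\|\mathbf{m}\|_{q,\Omega}\|\widetilde{\mathbf{u}}_D\|_{p,\Omega}$ which are absent from \eqref{cotau}; the paper keeps $\|\nabla\mathbf{w}\|_{2,\Omega}$ through the H\"older step and absorbs it against $\|D\mathbf{w}\|_{2,\Omega}^2$ without such factors, so your aggregate data factor $B$ is not literally the parenthetical of \eqref{cotau}.
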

\begin{proof}
This proof is standard, but we sketch it because its quantitative expression.
Choose \(\mathbf{v}=\mathbf{w}\in \mathbf{V}\) as a test function in \eqref{fluidw}, and use Lemma \ref{lemb} to find
\begin{align*}
\int_\Omega \mu(\xi)|D\mathbf{w}|^2\dif{x}+
\int_\Omega\lambda(\xi)|\nabla\cdot\mathbf{w}|^2 \dif{x} 
+\int_{\Gamma}\gamma(\xi)|\mathbf{w}_T|^2\dif{s} \nonumber \\
\leq \left(\|\pi\|_{2,\Omega}+\|\mathbf{m}\|_{q,\Omega} 
\|\widetilde{\mathbf{u}}_D\|_{p,\Omega}+ 2\| \mu(\xi)D\widetilde{\mathbf{u}}_D\|_{2,\Omega}  +\|\lambda(\xi)\nabla\cdot\widetilde{\mathbf{u}}_D\|_{2,\Omega}
\right)\|\nabla\mathbf{w}\|_{2,\Omega} \\
 +\frac{1}{2}\|\sqrt{\gamma(\xi)}\widetilde{\mathbf{u}}_D\|_{2,\Gamma}^2 
 +\frac{1}{2}\|\sqrt{\gamma(\xi)} \mathbf{w}_T\|_{2,\Gamma}^2
\end{align*}
taking the H\"older and Young inequalities into account and
using the fact that \( (\nabla\cdot\mathbf{w})^2\leq|\nabla\mathbf{w}|^2\).
Since \( n\lambda(\xi)+\mu(\xi)\geq 0\),  applying \eqref{mu1} and \eqref{g1} we have
\begin{align*}
\frac{n-1}{n}\mu_\#\|D\mathbf{w}\|_{2,\Omega}^2+\frac{\gamma_\#}{2}\| \mathbf{w}_T\|_{2,\Gamma}^2 \\
\leq\int_\Omega \mu(\xi)\left(|D\mathbf{w}|^2-\frac{1}{n}|\nabla\cdot\mathbf{w}|^2\right) \dif{x} 
+\frac{1}{2}\int_{\Gamma}\gamma(\xi)|\mathbf{w}_T|^2\dif{s} \nonumber \\
\leq  \frac{n-1}{2n}\mu_\# \|D\mathbf{w}\|_{2,\Omega}^2+
\frac{n}{2(n-1)\mu_\#}\left( \|\pi\|_{2,\Omega}+\|\mathbf{m}\|_{q,\Omega} 
\|\widetilde{\mathbf{u}}_D\|_{p,\Omega} \right.\\ \left.+
\mu^\#\|D\widetilde{\mathbf{u}}_D\|_{2,\Omega} 
  +\lambda^\#\|\nabla\cdot\widetilde{\mathbf{u}}_D\|_{2,\Omega}\right)^2
   +\frac{\gamma^\#}{2}\|\widetilde{\mathbf{u}}_D\|_{2,\Gamma}^2.
\end{align*}
Then, readjusting the above estimate we conclude Proposition \ref{pcotau}. 
\end{proof}

The following proposition asserts the existence and uniqueness of auxiliary velocity field.
\begin{proposition}\label{pau}
Let the assumptions (H1), (H3) and (H5) be fulfilled.
  For each  (\(\mathbf{m},\xi,\pi)\in  K_{q,2} \), with \(q> n\),  
  the  problem \eqref{fluidw} admits a unique solution \(\mathbf{w}\in\mathbf{V}\).
\end{proposition}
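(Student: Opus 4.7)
The plan is to apply the Lax--Milgram lemma to the bilinear form $a:\mathbf{V}\times\mathbf{V}\to\mathbb{R}$ and linear form $L:\mathbf{V}\to\mathbb{R}$ obtained by splitting \eqref{fluidw} into its left- and right-hand sides. Before that, I would check that $(\mathbf{V},\|\cdot\|_{\mathbf{V}})$ is a Hilbert space and that $\|\cdot\|_{\mathbf{V}}$ is equivalent to the restriction of $\|\cdot\|_{\mathbf{H}^1}$ to $\mathbf{V}$; this rests on a Korn-type inequality suited to the mixed boundary conditions $\mathbf{v}=\mathbf{0}$ on $\Gamma_D$ and $\mathbf{v}\cdot\mathbf{n}=0$ on $\Gamma$, available under Lipschitz regularity of $\partial\Omega$ together with $|\Gamma_D|>0$.

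Continuity of $a$ I would establish term by term. The viscous and friction contributions are controlled by $\mu^\#$, $\lambda^\#$, and $\gamma^\#$ through H\"older's inequality and the trace embedding. The convective term is bounded on $\mathbf{H}^1(\Omega)\times\mathbf{V}$ by Lemma \ref{lemb}, using the key threshold $q>n$ and the Sobolev embedding $\mathbf{H}^1(\Omega)\hookrightarrow\mathbf{L}^p(\Omega)$ with $1/p=1/2-1/q$. Continuity of $L$ is analogous: the pressure term is dominated by $\|\pi\|_{2,\Omega}\|\nabla\cdot\mathbf{v}\|_{2,\Omega}$, while the functional $\mathcal{G}(\mathbf{m},\xi,\widetilde{\mathbf{u}}_D,\mathbf{v})$ is handled by applying the same bounds to the fixed $\widetilde{\mathbf{u}}_D\in\mathbf{H}^1(\Omega)$ granted by (H5).

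Coercivity is the crucial property and essentially replicates the computation already carried out for the \emph{a priori} bound in Proposition \ref{pcotau}. Testing $a(\mathbf{w},\mathbf{w})$, the convective contribution vanishes by the skew-symmetry of $B$ in Lemma \ref{lemb}. Combining $n\lambda(\xi)+\mu(\xi)\geq 0$ from \eqref{mu}, the pointwise bound $|\nabla\cdot\mathbf{w}|^2\leq n|D\mathbf{w}|^2$, the lower bound $\gamma_\#$ from (H3), and the identity $\mathbf{w}_T=\mathbf{w}$ on $\Gamma$ (since $\mathbf{w}\cdot\mathbf{n}=0$ there), I would deduce
\[
a(\mathbf{w},\mathbf{w})\geq \tfrac{n-1}{n}\mu_\#\,\|D\mathbf{w}\|_{2,\Omega}^2+\gamma_\#\,\|\mathbf{w}\|_{2,\Gamma}^2
\geq \min\!\left\{\tfrac{n-1}{n}\mu_\#,\gamma_\#\right\}\|\mathbf{w}\|_{\mathbf{V}}^2.
\]
Existence and uniqueness of $\mathbf{w}\in\mathbf{V}$ then follow directly from Lax--Milgram.

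The main obstacle I anticipate is not the abstract Lax--Milgram step itself but the surrounding functional-analytic setting: confirming the Korn-type inequality that yields $\|\cdot\|_{\mathbf{V}}$--$\|\cdot\|_{\mathbf{H}^1}$ equivalence under Navier-slip conditions on a merely Lipschitz domain (where the standard reflection argument is unavailable), and ensuring that the low integrability of $\mathbf{m}$ does not spoil the convective bilinear form --- a point precisely secured by the threshold $q>n$ built into Lemma \ref{lemb}. Once these ingredients are in place, the conclusion of Proposition \ref{pau} is immediate.
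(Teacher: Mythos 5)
Your proposal takes essentially the same route as the paper: Proposition \ref{pau} is proved there by applying the Lax--Milgram Lemma to the form split into the skew-symmetric convective part (which vanishes on the diagonal by Lemma \ref{lemb}) and the symmetric viscous--friction part, with boundedness and coercivity exactly as in the computation of Proposition \ref{pcotau} that you replicate, your extra remarks on the Korn-type equivalence of \(\|\cdot\|_{\mathbf{V}}\) with the \(\mathbf{H}^1\)-norm being details the paper leaves implicit. The only caveat is that the factor \(\tfrac{n-1}{n}\mu_\#\) in your coercivity bound does not actually follow from the pointwise inequality \(|\nabla\cdot\mathbf{w}|^2\le n|D\mathbf{w}|^2\) together with \(n\lambda(\xi)+\mu(\xi)\ge 0\) (these yield only nonnegativity of the \(\mu,\lambda\) contribution, since the stronger bound \(|\nabla\cdot\mathbf{w}|^2\le |D\mathbf{w}|^2\) is false pointwise); this is, however, precisely the step taken in the paper's Proposition \ref{pcotau}, so it is a shared imprecision rather than a divergence from the paper's argument.
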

\begin{proof}
Let  \(a\)  be the (non symmetric) bilinear form on \(\mathbf{V}\times \mathbf{V}\),
 which is associated to  the energy functional \( J:\mathbf{V}\rightarrow\mathbb R\),  defined by
\begin{align*}
J(\mathbf{v})=& 
\int_\Omega \left(\mu(\xi)\frac{|D\mathbf{v}|^2}{2}+\lambda(\xi)\frac{|\nabla\cdot\mathbf{v}|^2}{2}
-\pi\nabla\cdot\mathbf{v} \right)\dif{x} +\\
&+\int_{\Gamma}\gamma(\xi)\frac{|\mathbf{v}_T|^2}{2}\dif{s} -
 \mathcal{G}(\mathbf{m},\xi, \widetilde{\mathbf{u}}_D,\mathbf{v}).
\end{align*}
The existence of \(J'\) is well-defined \cite[Appendix C]{stru} as the Fr\'echet derivative of a Nemytskii operator 
\( F:\Omega\times \mathbb R^n\times\mathbb M^{n\times n}_{\rm sym}\rightarrow\mathbb R\),
and the form \(a\) is sum of the non symmetric and symmetric parts
\[ 
 a(\mathbf{w},\mathbf{v})=-\int_\Omega \mathbf{m}\otimes \mathbf{w}:\nabla\mathbf{v} \dif{x}+\langle J'(\mathbf{w}),\mathbf{v}\rangle.
\] 

 Then, the existence and uniqueness of solution are  consequence of the Lax--Milgram Lemma \cite{lions}. 
\end{proof}

We finalize this section by proving the continuous dependence.
\begin{proposition}[Continuous dependence]\label{pum}
 Let \(\lbrace (\mathbf{m}_m,\xi_m,\pi_m)\rbrace_{m\in\mathbb{N}}\) be a sequence weakly convergent in
    \(  K_{q,r} \), for some \(q> n\) and \(r>2\).  Then, 
 the corresponding solutions \(\mathbf{w}_m=\mathbf{w}(\mathbf{m}_m,\xi_m,\pi_m)\) 
 to the problem \eqref{fluidw}\(_m\),  for each \(m\in\mathbb{N}\), weakly converge to 
 \(\mathbf{w}=\mathbf{w}(\mathbf{m},\xi,\pi)\) in \(\mathbf{V}\), which is the solution
 to the problem \eqref{fluidw} corresponding to the weak limit (\(\mathbf{m},\xi,\pi\)).
 \end{proposition}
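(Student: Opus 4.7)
The plan is to combine the quantitative estimate of Proposition \ref{pcotau} with standard compactness arguments, the key point being that the Carath\'eodory coefficients evaluated along the weakly convergent scalar sequence $\xi_m$ produce strong convergence of the coefficient fields in every $L^s(\Omega)$ and $L^s(\Gamma)$.

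First, I would establish a uniform bound on $\mathbf{w}_m$ in $\mathbf{V}$. Any weakly convergent sequence in $K_{q,r}$ is bounded, so $\|\mathbf{m}_m\|_{q,\Omega}$, $\|\xi_m\|_{1,2,\Omega}$ and $\|\pi_m\|_{r,\Omega}$ are bounded, and since $r>2$ and $|\Omega|<\infty$ also $\|\pi_m\|_{2,\Omega}$ is bounded. Feeding these bounds into \eqref{cotau} yields a uniform bound $\|\mathbf{w}_m\|_{\mathbf{V}}\leq C$, independent of $m$. Extracting a subsequence (not relabeled), $\mathbf{w}_m\rightharpoonup \mathbf{w}^\ast$ in $\mathbf{V}$.

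Next I would collect the necessary strong convergences needed to pass to the limit in \eqref{fluidw}$_m$. By the Rellich--Kondrachov embedding, $\xi_m\to\xi$ in $L^s(\Omega)$ for every $s<2^\ast$ and, along a subsequence, a.e.\ in $\Omega$; similarly, by the compactness of the trace $H^1(\Omega)\hookrightarrow\hookrightarrow L^s(\partial\Omega)$ for admissible $s$, $\xi_m\to\xi$ in $L^s(\Gamma)$ and a.e.\ on $\Gamma$. By the Carath\'eodory property of $\mu,\lambda,\gamma$ together with the uniform $L^\infty$ bounds \eqref{mu2}, \eqref{nu3}, \eqref{g1}, the dominated convergence theorem gives $\mu(\xi_m)\to\mu(\xi)$ and $\lambda(\xi_m)\to\lambda(\xi)$ strongly in every $L^s(\Omega)$, and $\gamma(\xi_m)\to\gamma(\xi)$ strongly in every $L^s(\Gamma)$. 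Analogously, $\mathbf{w}_m\to\mathbf{w}^\ast$ strongly in $\mathbf{L}^p(\Omega)$ with $1/p=1/2-1/q$ (which is admissible since $q>n$, cf.\ Lemma \ref{lemb}) and in $\mathbf{L}^2(\Gamma)$.

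With these convergences in hand, passage to the limit in each term of \eqref{fluidw}$_m$ is routine. For the convective term, write $\mathbf{m}_m\otimes\mathbf{w}_m=(\mathbf{m}_m-\mathbf{m})\otimes\mathbf{w}_m+\mathbf{m}\otimes\mathbf{w}_m$: the first piece pairs with $\nabla\mathbf{v}\in\mathbf{L}^2$ via the strong $L^p$-convergence of $\mathbf{w}_m$ and the boundedness of $\mathbf{m}_m-\mathbf{m}$ in $\mathbf{L}^q$ (actually one uses strong $L^p$ convergence of $\mathbf{w}_m$ and weak $L^q$ convergence of $\mathbf{m}_m$), while the second converges by weak$\times$strong pairing; the same argument handles the $\mathbf{m}_m\otimes\widetilde{\mathbf{u}}_D$ contribution inside $\mathcal{G}$. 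The viscosity terms pass to the limit because $\mu(\xi_m)\to\mu(\xi)$ strongly in $L^s$, $D\mathbf{w}_m\rightharpoonup D\mathbf{w}^\ast$ in $\mathbf{L}^2$, and $D\mathbf{v}\in\mathbf{L}^2$ is fixed (strong$\times$weak$\times$fixed); analogously for $\lambda(\xi_m)\nabla\cdot\mathbf{w}_m$. The pressure term converges thanks to $\pi_m\rightharpoonup\pi$ in $L^r\subset L^2$ against $\nabla\cdot\mathbf{v}\in L^2$. The boundary friction integrals converge by combining $\gamma(\xi_m)\to\gamma(\xi)$ in $L^s(\Gamma)$ with the strong $\mathbf{L}^2(\Gamma)$ convergence of the traces of $\mathbf{w}_m$ and $\widetilde{\mathbf{u}}_D$ (fixed). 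Altogether, $\mathbf{w}^\ast$ solves \eqref{fluidw} with data $(\mathbf{m},\xi,\pi)$; by the uniqueness asserted in Proposition \ref{pau}, $\mathbf{w}^\ast=\mathbf{w}(\mathbf{m},\xi,\pi)$, and because the limit is independent of the subsequence, the whole sequence $\mathbf{w}_m$ converges weakly in $\mathbf{V}$ to $\mathbf{w}$.

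The only delicate point is the convective term, which is bilinear in two weakly convergent factors; its treatment succeeds exactly because the choice $q>n$ forces $p>2$ via $1/p=1/2-1/q$, so that $H^1(\Omega)\hookrightarrow\hookrightarrow L^p(\Omega)$ and one of the two weak convergences is upgraded to strong. Every other limit passage is a straightforward strong$\times$weak pairing once the Carath\'eodory/dominated-convergence step has been carried out.
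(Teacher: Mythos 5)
Your proposal is correct and follows essentially the same route as the paper: uniform boundedness of $\mathbf{w}_m$ in $\mathbf{V}$ via the estimate \eqref{cotau}, Rellich--Kondrachov and trace compactness for $\xi_m$ and $\mathbf{w}_m$, continuity of the Nemytskii operators with dominated convergence for the coefficients, and weak--strong pairing to pass to the limit in each term of \eqref{fluidw}$_m$ (the convective term handled by weak $L^q$ convergence of $\mathbf{m}_m$ against strong $L^p$ convergence of $\mathbf{w}_m$, possible since $q>n$). Your additional step of invoking uniqueness from Proposition \ref{pau} to identify the limit and upgrade subsequence convergence to convergence of the whole sequence is a welcome completion of an argument the paper leaves implicit.
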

 \begin{proof}
Let us  take the sequences
 \begin{align*}
\mathbf{m}_m\rightharpoonup\mathbf{m}&\mbox{ in } \mathbf{L}^q(\Omega) \mbox{ for some } q> n;\\
\xi_m\rightharpoonup\xi&\mbox{ in } H^{1}(\Omega);\\
\pi_m\rightharpoonup\pi&\mbox{ in }L^r(\Omega)\mbox{ for some }r> 2.
\end{align*}
The Rellich--Kondrachov  embeddings \(  H^{1}(\Omega)\hookrightarrow\hookrightarrow L^2(\Omega)\)
and \(  H^{1}(\Omega)\hookrightarrow\hookrightarrow L^2(\Gamma)\)
yield \( \xi_m\rightarrow\xi\) in \( L^2(\Omega)\) and \(L^2(\Gamma)\).
The continuity of the Nemytskii operators, \(\mu\), \(\lambda\) and \(\gamma\), and 
Lebesgue dominated convergence theorem imply
\begin{align*}
\mu(\xi_m)D\mathbf{v}\rightharpoonup\mu(\xi)D\mathbf{v} &\mbox{ in } [L^2(\Omega)]^{n\times n}; \\
\lambda(\xi_m)\nabla\cdot \mathbf{v}\rightharpoonup\lambda(\xi)\nabla\cdot \mathbf{v} &\mbox{ in } L^2(\Omega); \\
\gamma(\xi_m)\mathbf{v}_T\rightharpoonup\gamma(\xi)\mathbf{v}_T &\mbox{ in } \mathbf{L}^2(\Gamma).
\end{align*}

Let \(\mathbf{w}_m=\mathbf{w}(\mathbf{m}_m,\xi_m,\pi_m)\) be the corresponding solution
 to the problem \eqref{fluidw}\(_m\),  for each \(m\in\mathbb{N}\).
  The uniform estimate \eqref{cotau} allows to extract at least one subsequence, still denoted by \(\mathbf{w}_m\),
  of the solutions \( \mathbf{w}_m=\mathbf{w}(\mathbf{m}_m,\xi_m,\pi_m)\) weakly convergent for some
 \( \mathbf{w}\in \mathbf{V}\). Consequently, we have
\begin{align*}
\nabla\mathbf{w}_m\rightharpoonup \nabla\mathbf{w}&\mbox{ in } [L^2(\Omega)]^{n\times n};\\
\mathbf{w}_m\rightarrow \mathbf{w} &\mbox{ in }\mathbf{L}^p(\Omega)\mbox{ and on }\mathbf{L}^2(\partial\Omega),
\end{align*}
 for \(p<2^*\).

The above convergences allow to pass to the limit as  \( m\) tends to infinity in \eqref{fluidw}\(_m\), 
concluding that \( \mathbf{w}\)  satisfies the system \eqref{fluidw}. 
 \end{proof}
 
  \section{Existence and uniqueness of density solution}
\label{sdens}  

In this section, our objective is not to apply the artificial viscosity technique that 
approximates the continuity equation by an elliptic equation through a vanishing viscosity
(also known as elliptic approximation) as it has being usual. Our argument goes out in the spirit of the Helmholtz decomposition 
\[ 
\mathbf{a}=\mathbf{a}_{\bm{\omega}}+\mathbf{a}_\psi ,
\]
where 
\begin{itemize}
\item \(\mathbf{a}_{\bm{\omega}}=\nabla\times {\bm{\omega}}\) stands for the solenoidal (divergence-free) component,
 \textit{i.e.}
 it satisfies \(\nabla\cdot \mathbf{a}_{\bm{\omega}} = \mathbf{0}\) in \(\Omega\).

\item \(\mathbf{a}_\psi =\nabla\psi\) stands for the irrotational component (curl-free),
 \textit{i.e.} it satisfies \(\nabla\times \mathbf{a}_\psi =\mathbf{0}\) in \(\Omega\).
\end{itemize}
We refer to \cite{galdi} for the weak \(L^q\)-solution to the Dirichlet--Laplace problem being motivated by the Weyl decomposition.
 
On the one hand, we consider the  Neumann--Laplace problem 
\begin{align}
\Delta\psi &= 0\qquad \mbox{ in }\Omega\label{NL1}\\
\nabla\psi\cdot\mathbf{n} &=  \rho_\infty \mathbf{u}_{D}\cdot\mathbf{n} \mbox{ on }\Gamma_{D}\\
\nabla\psi\cdot\mathbf{n} &= 0\qquad \mbox{ on }\Gamma,
\label{NL3}
\end{align}
with the zero mean value datum \(g:= \rho_\infty \mathbf{u}_{D}\cdot\mathbf{n}\chi_{\Gamma_{D}}
 \in L^q(\partial\Omega)\hookrightarrow
\left( B^{q'}_{1/q}(\partial\Omega)\right)'\), where the Besov space 
  under the usual notation \( B^{1/q}_{q',q'}\)  
is in fact the Slobodetskii space   \( W^{1/q,q'}\) for \(0<1/q<1/n\) and \(1<q'<n/(n-1)\).  
Thanks to potential theory \cite{fabes,geng-shen}, the problem \eqref{NL1}-\eqref{NL3}, 
with the zero mean value datum \(g\in
B^q_{-1/q}(\partial\Omega)=\left( B^{q'}_{1/q}(\partial\Omega)\right)'\),
 admits the unique solution  \(\psi\in W^{1,q}(\Omega)\) represented by
\[
\psi(x) =\int_{\partial\Omega}G_N(x,y)g(y)\dif{s}_y + \overline{\psi}
\]
where \(G_N(x,y)=E(x-y)+\phi(y)\) is the Green function of the second type, \textit{i.e.} it solves the Neumann--Poisson boundary value problem
\(\Delta G_N(x,\cdot)=\delta_x +1/|\Omega|\) in \(\Omega\) and \(\nabla G_N\cdot\mathbf{n}=0\) on \(\partial\Omega\) \cite{ijpde14}.
Here, \(\delta_x\) is the Dirac delta function at the point \(x\).
The Green function \(E\), being the fundamental solution for \(\Delta\) in \(\mathbb{R}^n\)
with pole at the origin, is given by
\begin{align*}
E(x)&= \frac{1}{2\pi}\ln|x|\ \mbox{ if } n=2;\\
E(x)&= \frac{1}{4\pi}\frac{1}{|x|}\quad\mbox{ if } n=3.
\end{align*}
The function \(\phi\) solves \(\Delta\phi =1/|\Omega|\) 
 in \(\Omega\) and \(\nabla (E(x-\cdot)+\phi)\cdot\mathbf{n}=0\) on \(\partial\Omega\).
The uniqueness of the Neumann problem is possible by the compatibility condition \eqref{cc}, up to the additive constant 
\(\overline{\psi}= |\Omega|^{-1}\int_\Omega \psi \dif{x}\), where \( |\Omega| \triangleq \mathrm{meas} (\Omega)\).

The solution \(\psi\), the so called scalar potential, satisfies the estimate
\begin{equation}\label{cotapsi}
\|\nabla\psi\|_{q,\Omega}\leq C_q \| \rho_\infty \mathbf{u}_{D}\cdot\mathbf{n} \|_{q,\partial\Omega},
\end{equation}
for any \(1< q <\infty\) if \(\Omega\) is of class \(C^1\), for   the sharp ranges   
\(4/3-\varepsilon < q < 4+\varepsilon\) if \(n=2\) or \(3/2-\varepsilon< q <3+\varepsilon\) if \(n=3\) and 
 \(\Omega\) is bounded Lipschitz, with \(\varepsilon>0\)  depending on \(\Omega\) and
 \(C_q>0\) depending on \(n\), \(q\), and the Lipschitz character of \(\Omega\)
  \cite{geng-shen,geng-shen10,mitrea}.
Some specific results are known for convex domains for \(1<q<\infty\) if \(n=2\) and
 for \(1<q<4\) if \(n=3\)  \cite{dong,geng-shen10}.

On the other hand, we find the corresponding vector that makes possible the decomposition.

First, let us establish in the two dimensional space the existence of  our auxiliary density function. 
\begin{proposition}[\(n=2\)]\label{p2D}
Let \((\mathbf{m},\xi,\pi)\in \mathbf{L}^q(\Omega)\times  H^{1}(\Omega)\times L^r (\Omega) 
\) and let \(\mathbf{u}\in \mathbf{H}^1(\Omega)\) be  the corresponding solution
 to \eqref{fluidw} obtained in Section \ref{sZO}.
 Then, there exists  a unique  function 
  \(\rho\) verifying
\begin{equation}\label{drho2}
\rho\mathbf{u}=\nabla \psi +\nabla\times\bm{\omega}\mbox{ a.e. in } \Omega,\end{equation} 
with \(\psi\)  being the unique solution to \eqref{NL1}-\eqref{NL3}
and for some \( \bm{\omega}\) in \(\mathbf{W}^{1,q}(\Omega)\).
In particular, it is non-negative.  Moreover, \eqref{syst2} holds. 
\end{proposition}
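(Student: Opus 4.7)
The plan is to exploit the identity \eqref{drho2} algebraically rather than inverting the continuity equation directly (the route blocked by the author's refusal of artificial viscosity). Concretely, I would construct $\bm{\omega}$ first by solving a scalar transport equation, and then read off $\rho$ from \eqref{drho2}. The Helmholtz-style decomposition of $\rho\mathbf{u}$ then costs nothing: by construction $\nabla\psi$ carries the boundary flux $\rho_\infty\mathbf{u}_D\cdot\mathbf{n}$, and $\nabla\times\bm{\omega}$ is solenoidal with vanishing normal flux.

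Taking the scalar product of \eqref{drho2} with $\mathbf{u}^\perp=(-u_2,u_1)$ annihilates the left-hand side (since $\rho\mathbf{u}\cdot\mathbf{u}^\perp=0$) and leaves the parallelism constraint
\[
(\nabla\psi+\nabla\times\bm{\omega})\cdot\mathbf{u}^\perp=0,\qquad\text{i.e.,}\qquad
\mathbf{u}\cdot\nabla\bm{\omega}=u_1\partial_2\psi-u_2\partial_1\psi\quad\text{in }\Omega.
\]
This is a first-order transport PDE for $\bm{\omega}$ with convective velocity $\mathbf{u}\in\mathbf{H}^1(\Omega)$ and source in $L^q(\Omega)$; the integrability of the source follows from \eqref{cotapsi} together with the planar Sobolev embedding $\mathbf{H}^1\hookrightarrow\mathbf{L}^p$ for every finite $p$. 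The Dirichlet condition $\rho=\rho_\infty$ on $\Gamma_D$, together with $\mathbf{u}\cdot\mathbf{n}=0$ on $\Gamma$ and the structure of \eqref{drho2}, transfers into a boundary condition on $\bm{\omega}$: constancy of $\bm{\omega}$ on each connected component of $\Gamma$, and a Dirichlet trace on $\Gamma_\mathrm{in}$ fixed algebraically. I would then solve this transport/Dirichlet problem in $W^{1,q}(\Omega)$ via the DiPerna--Lions renormalized-solution framework, which applies since $\mathbf{u}\in\mathbf{H}^1$ has divergence in $L^2$, the source is $L^q$ with $q>n=2$, and the inflow boundary is non-characteristic.

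With $\bm{\omega}$ in hand, $\rho$ is defined pointwise from \eqref{drho2}: on $\{u_1\neq 0\}$ by $\rho=u_1^{-1}(\partial_1\psi+\partial_2\bm{\omega})$ and analogously on $\{u_2\neq 0\}$, with consistency of the two formulas on the overlap guaranteed by the parallelism constraint above. On $\{\mathbf{u}=\mathbf{0}\}$ one sets $\rho=0$ without affecting $\rho\mathbf{u}$. Non-negativity $\rho\geq 0$ is inherited by transporting the positive inflow datum $\rho_\infty$ along the renormalized flow of $\mathbf{u}$. Finally, \eqref{syst2} is recovered by taking the distributional divergence of \eqref{drho2}, which yields $\nabla\cdot(\rho\mathbf{u})=\Delta\psi+\nabla\cdot(\nabla\times\bm{\omega})=0$ in $\Omega$, and then integrating by parts against $v\in W^{1,q'}(\Omega)$ using the Neumann data of $\psi$ from \eqref{NL1}--\eqref{NL3}. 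Uniqueness of $\rho$ follows from that of $\psi$ (already recalled) and of $\bm{\omega}$ (pinned down up to an additive constant by the boundary conditions). The principal obstacle is the transport step: on a merely Lipschitz domain with an $\mathbf{H}^1$ velocity and a characteristic boundary component $\Gamma$, classical characteristics are unavailable, and DiPerna--Lions renormalization is essential; the planar setting is also exploited both for the scalar stream-function representation $\nabla\times\bm{\omega}$ and for the unrestricted embedding $\mathbf{H}^1\hookrightarrow\mathbf{L}^p$ at all finite $p$.
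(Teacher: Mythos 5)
Your route is genuinely different from the paper's: the paper never solves a transport problem, but instead builds the solenoidal complement $\mathbf{z}$ of $\nabla\psi$ pointwise, by projecting/rejecting $\nabla\psi$ along $\mathbf{u}$ and $\mathbf{u}_\perp$ and invoking the $L^q$ Helmholtz--Weyl decomposition on Lipschitz domains, so that $\mathbf{z}$ lies by construction in the class of $L^q$ fields with zero divergence and zero normal trace, and $\rho$ comes out of an explicit formula (hence non-negative). Your reduction to the steady equation $\mathbf{u}\cdot\nabla\omega=u_1\partial_2\psi-u_2\partial_1\psi$ is a legitimate reformulation, but the step you lean on to solve it is not available. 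DiPerna--Lions renormalization addresses the evolution problem (or steady problems with a coercive zeroth-order term), requires $\operatorname{div}\mathbf{u}\in L^\infty$ or at least one-sided bounds on it, and produces renormalized solutions in Lebesgue classes; here $\mathbf{u}\in\mathbf{H}^1(\Omega)$ with $\operatorname{div}\mathbf{u}\in L^2(\Omega)$ only, there is no zeroth-order term, part of the boundary is characteristic, and --- crucially --- you need $\bm{\omega}\in\mathbf{W}^{1,q}(\Omega)$ with $q>2$ so that $\rho\mathbf{u}\in\mathbf{L}^q(\Omega)$ as the statement requires. No renormalization argument controls $\nabla\omega$; Sobolev regularity of a transport solution needs differentiated-equation estimates, i.e.\ essentially $\nabla\mathbf{u}$ bounded, which is exactly the regularity regime the paper is trying to avoid. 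Uniqueness and the maximum-principle argument you use for $\rho\geq 0$ ("transport of the positive inflow datum along the renormalized flow") presuppose the same unavailable well-posedness, so non-negativity is also unproved in your scheme.

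There is a second, structural gap in the boundary bookkeeping. To recover \eqref{syst2} from \eqref{drho2} you need $\int_\Omega\nabla\times\bm{\omega}\cdot\nabla v\,\mathrm{d}x=0$ for all $v\in W^{1,q'}(\Omega)$, i.e.\ the normal trace of $\nabla\times\bm{\omega}$ must vanish on the whole of $\partial\Omega$; in terms of the stream function this forces $\omega$ to be locally constant on $\Gamma_\mathrm{in}\cup\Gamma_\mathrm{out}$ as well as on $\Gamma$. A first-order transport problem only admits data on the inflow portion: on $\Gamma_\mathrm{out}$ the trace of $\omega$ is dictated by the interior solution and will not, in general, be constant --- equivalently, the outflow flux of $\rho\mathbf{u}$ produced by your construction is whatever the transport dynamics yields, whereas \eqref{syst2} prescribes it to be $\rho_\infty\mathbf{u}_D\cdot\mathbf{n}$ on all of $\Gamma_D$. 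So the weak identity fails for test functions not vanishing near $\Gamma_\mathrm{out}$, and "by construction $\nabla\times\bm{\omega}$ has vanishing normal flux" is precisely what your construction cannot guarantee. The paper sidesteps both difficulties at once by working inside $\mathbf{H}_q$ (zero normal trace imposed as a membership condition, not as a boundary condition of a PDE) and by defining $\rho$ algebraically, at the price of a delicate pointwise case analysis on the angle between $\nabla\psi$ and $\mathbf{u}$; if you want to keep the transport formulation, you would need a steady transport theory with $H^1$ velocity, $L^2$ divergence, characteristic and outflow boundaries, and $W^{1,q}$ output, none of which currently exists.
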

\begin{proof}
 Let  \(\psi\in W^{1,q}(\Omega)\)  be the unique solution to \eqref{NL1}-\eqref{NL3}, which verifies \eqref{cotapsi},
 for \(1< q <2+\varepsilon\), with \(\varepsilon>0\)  depending on \(\Omega\) and
 \(C_q>0\) depending on \(n\), \(q\), and the Lipschitz character of \(\Omega\).

 By the potential theory \cite{amrouche,mitrea}, 
 it suffices to seek for  a unique non-negative density function that satisfies 
\begin{equation}\label{weyl}
\rho \mathbf{u} = \nabla\psi +\mathbf{z},
\end{equation}
with \(\mathbf{z}\)  belonging to 
 \[
 \mathbf{H}_q :=\{\mathbf{v}\in \mathbf{L}^{q}(\Omega):\ 
\nabla\cdot\mathbf{v}=0\mbox{ in }\Omega,\ \mathbf{v}\cdot\mathbf{n}=0\mbox{ on }\partial\Omega\}.
 \]

Taking  in \eqref{weyl} the inner product with \(\mathbf{u}\), we  obtain
 \begin{equation}\label{rho2d}
\rho = \frac{1}{|\mathbf{u}|^2} (\nabla\psi+\mathbf{z} )\cdot\mathbf{u}
\quad  \mbox{ in } \Omega[|\mathbf{u}|\not=0],
 \end{equation}
otherwise, we define \(\rho= \rho_0\) in \( \Omega[|\mathbf{u}|=0]\) (cf. Remark \ref{rho0}).
Hereafter, the set \(A[\mathcal{S}]\) means \(\{x\in A:\, \mathcal{S}(x)\}\), with
\(\mathcal{S}\) denoting a sentence to be pointwisely (a.e.) satisfied in \(A\),
  which may represent either \(\Omega\) or \(\Gamma\). 

Taking  in \eqref{weyl} the inner product with  \(\mathbf{u}_\bot=(-u_2,u_1)\in \mathbf{L}^{p}(\Omega)\), 
for any \(1<p<\infty\), we  find the relation
\[
\mathbf{z}\cdot \mathbf{u}_\bot= u_1\partial_2\psi - u_2\partial_1\psi:= \nabla \times \bm{\psi}\cdot\mathbf{u},
\]
taking \(\bm{\psi}=(0,0,\psi)\) into account.
We emphasize that the absolute value of the real number
\(\nabla \times \bm{\psi}\cdot\mathbf{u}/|\mathbf{u}|=\nabla\psi\cdot \mathbf{u}_\bot/|\mathbf{u}|\in \mathbf{L}^q(\Omega)\)
 is the magnitude of the vector rejection of \(\nabla\psi\) from \(\mathbf{u}\),
which is defined as 
\begin{equation}\label{rej}
\nabla\psi -\frac{\nabla\psi\cdot \mathbf{u}_\bot}{|\mathbf{u}|^2} \mathbf{u}_\bot
=\left(\nabla\psi\cdot \frac{\mathbf{u}}{|\mathbf{u}|}\right) \frac{\mathbf{u}}{|\mathbf{u}|},
\end{equation}
where the right hand side stands for  the vector projection of \(\nabla\psi\) onto \(\mathbf{u}\).
 In the following, for the sake of simplicity, we assume that \(\nabla\psi \cdot\mathbf{u}_\bot \geq 0\),
 otherwise we may similarly argue by redefining  \(\mathbf{u}_\bot=(u_2,-u_1)\).
 
Let us consider the following  two cases.

\textsc{Case 1.} If \(\nabla \times \bm{\psi}\cdot\mathbf{u}=0\), it means that 
\[
\nabla\psi = \pm |\nabla\psi|\frac{\mathbf{u}}{|\mathbf{u}|}.
\] 

If \(\angle(\nabla\psi, \mathbf{u})=0\), we may take  \(\mathbf{z}= \mathbf{0} \) in \eqref{weyl}.
 Then, we obtain \(\rho =|\nabla\psi|/ |\mathbf{u}|\).
In particular, \(\rho \) is unique and non-negative.

Notice that the case \(\angle(\nabla\psi, \mathbf{u})=\pi\)  does not occur a.e. in \(\Omega\),
 because it leads to the contradiction 
\[
- |(\nabla\psi)^*| /|\mathbf{u_D}| \mathbf{u_D}\cdot \mathbf{n} =\rho_\infty \mathbf{u_D}\cdot \mathbf{n}
  \mbox{ on }\Gamma_D,
\]
where \((\nabla\psi)^*\) denotes the nontangential maximal function of \(\nabla\psi\) \cite{geng-shen,geng-shen10}.
If  \(\angle(\nabla\psi, \mathbf{u})=\pi\) in an open ball \(B\subset\subset \Omega\), 
we may take  \(\mathbf{z}= -2\nabla\psi \) that fulfills \eqref{weyl} in \(B\).
 Then, we obtain \(\rho =|\nabla\psi|/ |\mathbf{u}|\), which is unique and non-negative.

\textsc{Case 2.}
 If \(\nabla \times \bm{\psi}\cdot\mathbf{u}\not=0\), it means that \(\cos(\angle(\nabla\psi, \mathbf{u}))<1\).  
 Next, we will need three auxiliary functions, denoted by  \(\mathbf{a}\), \(\varphi\)
  and  \(\mathbf{F}\).
 
  In accordance with the latter case,
we define the  vector \(\mathbf{a}\in \mathbf{L}^q(\Omega)\) as follows 
\begin{equation}\label{refle}
\nabla\psi +\mathbf{a}=|\nabla\psi|\frac{\mathbf{u}}{|\mathbf{u}|} :=\rho_1\mathbf{u}.
\end{equation}
This definition captures both cases (a) \(\nabla\psi \cdot\mathbf{u}>0\) and (b) \(\nabla\psi \cdot\mathbf{u}\leq 0\)
(see Fig. \ref{plane2D}).

\begin{figure}
\includegraphics[width=12cm]{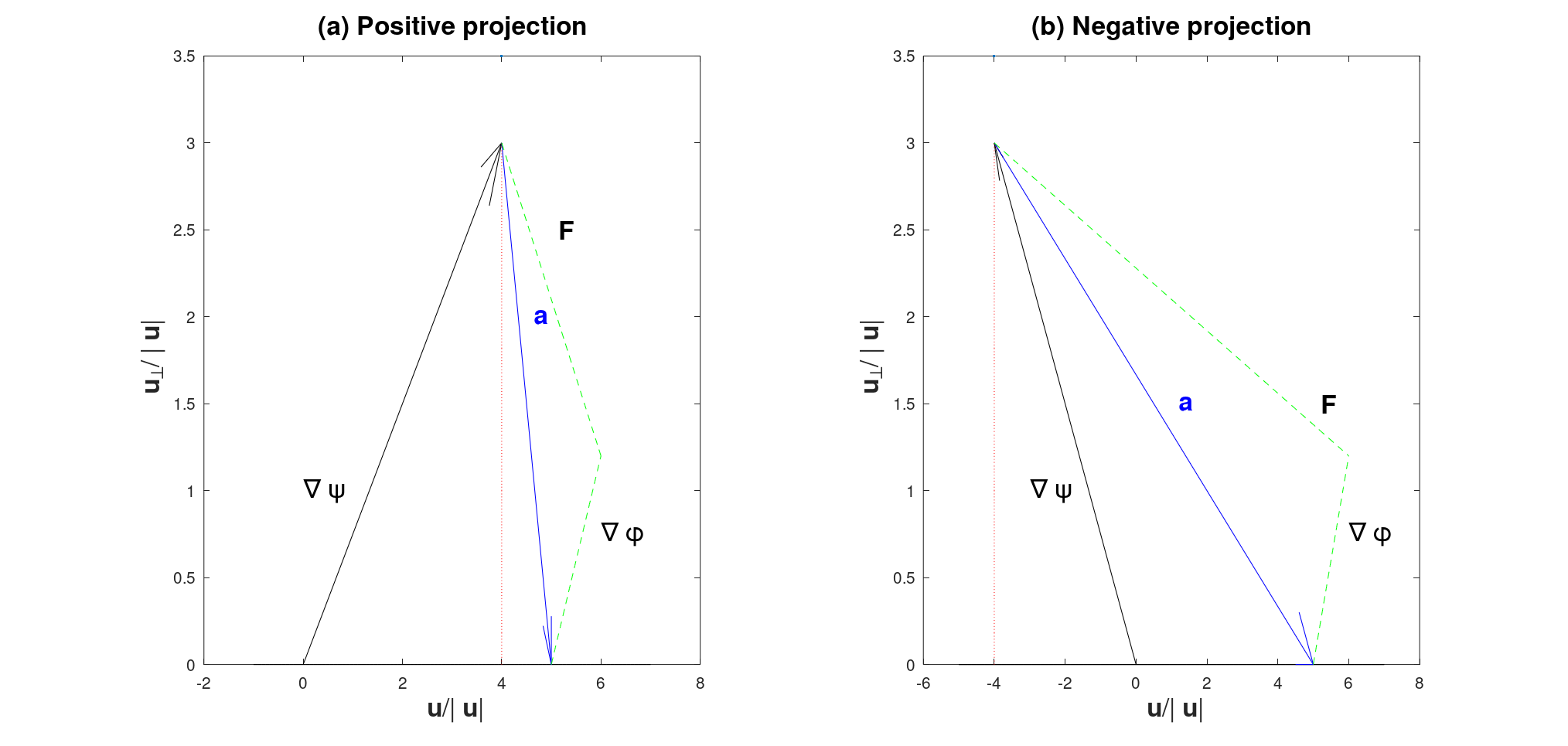}
\caption{Graphical representation of  \(\nabla\psi\) and \(\mathbf{a}\),
in black and blue solid lines, respectively,
relative to the coordinate system (\(\mathbf{u}/|\mathbf{u}|,\mathbf{u}_\bot/|\mathbf{u}|\)).
 Cases (a) \(\nabla\psi \cdot\mathbf{u}>0\) and (b) \(\nabla\psi \cdot\mathbf{u}\leq 0\).
 Red small dashed line represents the rejection, while green
long dashed lines stand for vectors \(\nabla\varphi\) and \(\mathbf{F}\).}
\label{plane2D}
\end{figure}
By the \(L^q-\)Helmholtz--Weyl decomposition (see e.g. \cite{geng-shen10,mitrea}),
the vector \(\mathbf{a}\) may be decomposed as \(\mathbf{a}=\nabla\varphi+ \mathbf{F}\). Here,
 \(\varphi\in W^{1,q}(\Omega)\) is the unique  (up to additive constants) solution to the variational problem
 \begin{equation}\label{NPoisson}
\int_\Omega\nabla\varphi\cdot\nabla v\dif{x} = \int _\Omega\mathbf{a}\cdot\nabla v \dif{x},
\quad \forall v\in W^{1,q'}(\Omega).
\end{equation}
The existence and uniqueness of this scalar potential in the quotient space  \(W^{1,q}(\Omega)/\mathbb{R}\) 
 is guaranteed by the range values of \(q\) for Lipchitz domains.
 Setting
\[
\mathbf{F}=  \mathbf{a}- \nabla\varphi,
\]
it is unique and it belongs to \(\mathbf{H}_q\).  
 Moreover, we have
\begin{equation}\label{cotas}
\max\lbrace \|\nabla\varphi\|_{q,\Omega},\|\mathbf{F}\|_{q,\Omega}\rbrace \leq C_q\|\mathbf{a}\|_{q,\Omega},
\end{equation}
where \(C_q\) depends only on \(q\) and the Lipschitz character of \(\Omega\).

Taking \eqref{refle} and the decomposition \eqref{rej} for \(\varphi\), we have
\begin{align} \label{relation}
\rho_1 \mathbf{u} -\left(
\nabla\varphi\cdot \frac{\mathbf{u}}{|\mathbf{u}|}\right) \frac{ \mathbf{u}}{|\mathbf{u}|}
&=\nabla\psi +\mathbf{F} +\left(
\nabla\varphi\cdot \frac{\mathbf{u}_\bot}{|\mathbf{u}|}\right) \frac{ \mathbf{u}_\bot}{|\mathbf{u}|}\\
\rho_2&= 
   \left\{\begin{array}{ll}
\rho_1-\nabla\varphi\cdot\mathbf{u}/|\mathbf{u}|^2& 
\mbox{ if } \rho_1-\nabla\varphi \cdot\mathbf{u}/|\mathbf{u}|^2>0\\
0  &\mbox{ otherwise}
\end{array}\right.\nonumber
\end{align}
It remains to evaluate the last term of the above relation, namely
\[
\mathbf{f} = \left(
\nabla\varphi\cdot \frac{\mathbf{u}_\bot}{|\mathbf{u}|}\right) \frac{ \mathbf{u}_\bot}{|\mathbf{u}|}.
\]
Or, equivalently
 \begin{align*}
 f_t(t,n)&=0 \\
 f_n(t,n) &=\nabla\varphi\cdot\mathbf{e}_n,
  \end{align*}
by taking  the change of coordinates
 \[
\left[ \begin{matrix}
 \mathbf{e}_t\\
 \mathbf{e}_n
 \end{matrix} \right]=
\left[ \begin{matrix}
 u_1/|\mathbf{u}| & u_2/|\mathbf{u}|\\
- u_2/|\mathbf{u}| & u_1/|\mathbf{u}|
 \end{matrix} \right]
\left[ \begin{matrix}
 \mathbf{e}_1\\
 \mathbf{e}_2
 \end{matrix} \right]
 \]
into account.
Hence, we define \(z_t\) and \(z_n\) such that 
\[
z_n = f_n \quad\mbox{ and }\quad z_t = -\int\partial_n z_n \dif{t} +\rho_3(n),
\] 
with \(\rho_3\) being such that \(z_t<0\). By returning to the Cartesian coordinate system,
 the boundary condition \(\mathbf{z}\cdot\mathbf{n}=0\)
guarantees the uniqueness of \(\rho_3\).
Finally, we choose the  vector \(\mathbf{z}\in\mathbf{H}_q\) such that 
\[
\mathbf{z} = \mathbf{F} + z_t\mathbf{e}_t+z_n\mathbf{e}_n ,
\]
by recalling the vector \(\mathbf{F}\) from  \eqref{relation}.
 
 From  \eqref{drho2},
the function \(\rho\) verifies the variational formulation \eqref{syst2},
  which concludes the proof of Proposition \ref{p2D}. 
\end{proof}

\begin{remark}\label{rho0}
 We call by \(\rho_0\) the constant density at STP (standard temperature and pressure).
Notice that the velocity may be zero, the so called stagnation.
The no upper boundedness of the density is related to that
 \(|\mathbf{u}|\rightarrow 0\) means \( \rho\rightarrow\infty\). However,
neither the velocity function nor the density function are continuous.
 It suggests that some upper boundedness will be possible, but it is still an open problem.
\end{remark}

Next, we study the three dimensional space.
\begin{proposition}[\(n=3\)]\label{p3D}
Let \((\mathbf{m},\xi,\pi)\in   \mathbf{L}^q(\Omega)\times  H^{1}(\Omega)\times L^r (\Omega)
\) and let \(\mathbf{u}\) be  the corresponding solution
 to \eqref{fluidw} obtained in Section \ref{sZO}.
 Then, there exists a unique  function \(\rho\) verifying
\begin{equation}\label{drho3}
\rho\mathbf{u}=\nabla \psi +\nabla\times\bm{\omega}   \mbox{ a.e. in } \Omega,
\end{equation}  
with \(\psi\) being the unique solution to \eqref{NL1}-\eqref{NL3} and for some  \( \bm{\omega}\)
 in \(\mathbf{W}^{1,q}(\Omega)\).
In particular, it is non-negative. Moreover, \eqref{syst2} holds.
\end{proposition}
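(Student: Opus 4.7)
The plan is to adapt the two-dimensional construction of Proposition \ref{p2D} by replacing the single orthogonal direction $\mathbf{u}_\bot$ with the two-dimensional orthogonal complement of $\mathbf{u}$, and by producing the divergence-free completion through a vector potential $\bm{\omega}\in\mathbf{W}^{1,q}(\Omega)$ rather than a scalar integration. First, I would solve the Neumann--Laplace problem \eqref{NL1}-\eqref{NL3} to obtain $\psi\in W^{1,q}(\Omega)$ in the Lipschitz range $3/2-\varepsilon<q<3+\varepsilon$ fulfilling \eqref{cotapsi}. Then I seek $\rho\geq 0$ and $\mathbf{z}\in\mathbf{H}_q$ verifying $\rho\mathbf{u}=\nabla\psi+\mathbf{z}$ pointwise: taking the inner product with $\mathbf{u}$ recovers $\rho$ as in \eqref{rho2d}, while the orthogonality requirement $(\rho\mathbf{u})\times\mathbf{u}=\mathbf{0}$ imposes $\mathbf{z}\times\mathbf{u}=-\nabla\psi\times\mathbf{u}$, which on $\Omega[|\mathbf{u}|\neq 0]$ prescribes the two components of $\mathbf{z}$ in the plane orthogonal to $\mathbf{u}$ as the negative of the rejection of $\nabla\psi$ from $\mathbf{u}$, leaving only the component along $\mathbf{u}/|\mathbf{u}|$ to be chosen.

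Following the Case 1 / Case 2 split of the 2D proof, I introduce the auxiliary vector $\mathbf{a}\in\mathbf{L}^q(\Omega)$ by $\nabla\psi+\mathbf{a}=|\nabla\psi|\mathbf{u}/|\mathbf{u}|$ and apply the $L^q$-Helmholtz--Weyl decomposition of \cite{geng-shen10,mitrea}, available for Lipschitz domains in the above 3D range, to split $\mathbf{a}=\nabla\varphi+\mathbf{F}$ with $\varphi\in W^{1,q}(\Omega)/\mathbb{R}$ solving the analogue of \eqref{NPoisson}, $\mathbf{F}\in\mathbf{H}_q$, and the estimate \eqref{cotas} still in force. Using a local orthonormal frame $(\mathbf{e}_t,\mathbf{e}_1,\mathbf{e}_2)$ adapted to $\mathbf{u}$, the two perpendicular components $\nabla\varphi\cdot\mathbf{e}_j$ ($j=1,2$) play the role of $f_n$ in the 2D argument; I produce the divergence-free completion as $\mathbf{z}=\mathbf{F}+\nabla\times\bm{\omega}_0$, where $\bm{\omega}_0\in\mathbf{W}^{1,q}(\Omega)$ solves a vector Poisson system under the Coulomb gauge $\nabla\cdot\bm{\omega}_0=0$ and the tangential boundary condition $\bm{\omega}_0\times\mathbf{n}=\mathbf{0}$, selected so that the perpendicular components of $\nabla\times\bm{\omega}_0$ coincide with the prescribed rejection. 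On the stagnation set $\Omega[|\mathbf{u}|=0]$ I set $\rho=\rho_0$ as in Remark \ref{rho0}, enforcing non-negativity by the same truncation rule displayed in \eqref{relation}.

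Once $\rho\mathbf{u}=\nabla\psi+\nabla\times\bm{\omega}$ is established (absorbing $\mathbf{F}$ into $\nabla\times\bm{\omega}$ through the standard representation of elements of $\mathbf{H}_q$ on Lipschitz domains as curls of $\mathbf{W}^{1,q}$ fields), the integral identity \eqref{syst2} follows from testing against $v\in W^{1,q'}(\Omega)$: the $\nabla\psi$ contribution reproduces the boundary datum through \eqref{NL1}-\eqref{NL3}, while the solenoidal part contributes nothing since $\nabla\cdot(\nabla\times\bm{\omega})=0$ in $\Omega$ and $(\nabla\times\bm{\omega})\cdot\mathbf{n}=0$ on $\partial\Omega$. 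Uniqueness of $\rho$ is then immediate from the pointwise representation.

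The main obstacle is the construction of $\bm{\omega}_0$: in two dimensions the divergence-free completion is obtained by a single scalar integration along a transverse direction (the computation producing $z_t$ from $z_n$ in the proof of Proposition \ref{p2D}), whereas in three dimensions the completion must be realized through a genuine vector potential subject to two perpendicular-component constraints together with the Coulomb gauge and the tangential boundary condition, and one has to verify that the resulting $\bm{\omega}_0$ patches into a global $\mathbf{W}^{1,q}(\Omega)$ field independently of the local choice of frame around the stagnation locus. The admissible range of $q$ is therefore dictated by the intersection of the sharp Lipschitz ranges for the Neumann--Laplace problem, the $L^q$-Helmholtz--Weyl decomposition, and the vector-potential regularity, all of which collapse to the window $3/2-\varepsilon<q<3+\varepsilon$ quoted after \eqref{cotapsi}.
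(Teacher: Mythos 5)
Your construction follows essentially the same route as the paper's proof: solve the Neumann--Laplace problem for \(\psi\) in the Lipschitz range \(3/2-\varepsilon<q<3+\varepsilon\), define \(\rho\) pointwise by projecting onto \(\mathbf{u}\) (with \(\rho=\rho_0\) on the stagnation set and the truncation giving non-negativity), handle the components perpendicular to \(\mathbf{u}\) by repeating the two-dimensional Case 1/Case 2 argument with the auxiliary vector \(\mathbf{a}\) and the \(L^q\) Helmholtz--Weyl decomposition in a frame adapted to \(\mathbf{u}\) and \(\nabla\psi\), and finally realize the solenoidal remainder as \(\nabla\times\bm{\omega}\) with \(\bm{\omega}\in\mathbf{W}^{1,q}(\Omega)\) via the representation results of \cite{amrouche}, from which \eqref{syst2} follows. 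The only divergence is in presentation: the paper first builds the completion by the in-plane 2D integration and only afterwards invokes the curl representation for the already-constructed field \(\rho\mathbf{u}-\nabla\psi\), whereas you recast that same completion step as a gauged vector Poisson problem for \(\bm{\omega}_0\) with prescribed perpendicular curl components; the ``main obstacle'' you flag there is exactly the step the paper itself leaves at sketch level by deferring to Proposition \ref{p2D} and to \cite{amrouche}.
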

\begin{proof}
 Let  \(\psi\in W^{1,q}(\Omega)\)  be the unique solution to \eqref{NL1}-\eqref{NL3}, which verifies \eqref{cotapsi},
 for \(3/2-\varepsilon< q <3+\varepsilon\), with \(\varepsilon>0\)  depending on \(\Omega\) and
 \(C_q>0\) depending on \(n\), \(q\), and the Lipschitz character of \(\Omega\).
  
In the three dimensional space, arguing as in the two dimensional Proposition \ref{p2D} we seek for
\begin{equation}\label{defrho}
\rho=(\nabla \psi +\nabla\times\bm{\omega})\cdot \mathbf{u}/|\mathbf{u}|^2\quad  \mbox{ in } \Omega[|\mathbf{u}|\not=0]
\end{equation}
otherwise, we define \(\rho= \rho_0\) if \( \Omega[|\mathbf{u}|=0]\)  (cf. Remark \ref{rho0}).  
As the objective is to find a scalar function,  the argument of the proof of Proposition \ref{p2D} may be repeated in the
plane formed by the vectors \(\mathbf{u}\) and \(\nabla\psi\), \textit{i.e.}
we consider the local  coordinate system
  \((\mathbf{e}_t,\mathbf{e}_n,\mathbf{0})\), where
 \(\mathbf{e}_t = \mathbf{u}/|\mathbf{u}|\) and  \(\mathbf{e}_n = \nabla\psi\times\mathbf{u}/|\nabla\psi\times\mathbf{u}|\).

Therefore,  there exists a vector potential \(\bm{\omega}\)
such that \(\nabla\times \bm{\omega} =\rho \mathbf{u}-\nabla\psi\), \textit{i.e.}   \eqref{drho3},
which may be given unique \cite{amrouche}.
\end{proof}
    
Finally, we are in conditions to determine the estimate for the linear momentum (cf. \eqref{cotapsi}). 
\begin{corollary}\label{crho}
 Let  \(\Omega\) be Lipschitz. 
For \(n = 2,3\), let \(\rho\) be the unique    function given at Propositions \ref{p2D} and \ref{p3D}.
Then,  the estimate 
\begin{equation}\label{cotamm}
\|  \rho\mathbf{u} \|_{q,\Omega}\leq 3 C_q \| \rho_\infty \mathbf{u}_{D}\cdot\mathbf{n} \|_{q,\partial\Omega}:=R_1
\end{equation}
holds, 
for any \(n< q <n+\varepsilon\) and \(n=2,3\),
 with  \(\varepsilon\) depending on \(\Omega\)
 and \(C_q\) depending on \(n\), \(q\), and the Lipschitz character of \(\Omega\).
\end{corollary}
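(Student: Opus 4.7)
The plan is to leverage the pointwise decomposition established in Propositions \ref{p2D} and \ref{p3D},
\[
\rho\mathbf{u}=\nabla \psi +\nabla\times\bm{\omega}\quad\mbox{a.e. in }\Omega,
\]
and estimate the two summands separately in the \(L^q\) norm. Since \(\psi\) is the unique solution of the Neumann--Laplace problem \eqref{NL1}--\eqref{NL3} with datum \(\rho_\infty\mathbf{u}_D\cdot\mathbf{n}\chi_{\Gamma_D}\), its gradient is controlled directly by the sharp potential-theoretic estimate \eqref{cotapsi}, valid for \(n<q<n+\varepsilon\) in bounded Lipschitz domains. The remaining task is to quantify the rotational part \(\nabla\times\bm{\omega}\) in terms of the same boundary datum; the factor \(3\) will arise by combining \(\|\nabla\psi\|_{q,\Omega}\) with a bound of the form \(\|\nabla\times\bm{\omega}\|_{q,\Omega}\leq 2C_q\|\rho_\infty\mathbf{u}_D\cdot\mathbf{n}\|_{q,\partial\Omega}\).

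Concretely, I would proceed as follows. First, apply the triangle inequality
\[
\|\rho\mathbf{u}\|_{q,\Omega}\leq \|\nabla\psi\|_{q,\Omega}+\|\nabla\times\bm{\omega}\|_{q,\Omega}.
\]
Next, insert \eqref{cotapsi} for the first term. For the second term, go back to the construction of \(\mathbf{z}=\nabla\times\bm{\omega}\) inside the proof of Proposition \ref{p2D} (the three-dimensional case being analogous via the local planar reduction used in Proposition \ref{p3D}): the auxiliary field \(\mathbf{a}\) defined in \eqref{refle} satisfies the pointwise bound
\[
|\mathbf{a}|=|\rho_1\mathbf{u}-\nabla\psi|\leq|\rho_1\mathbf{u}|+|\nabla\psi|=2|\nabla\psi|,
\]
since \(|\rho_1\mathbf{u}|=|\nabla\psi|\) by construction. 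Plugging this into the Helmholtz--Weyl estimate \eqref{cotas} yields \(\|\mathbf{F}\|_{q,\Omega}+\|\nabla\varphi\|_{q,\Omega}\leq 2C_q\|\nabla\psi\|_{q,\Omega}\). The remaining tangential/normal corrections \(z_t\mathbf{e}_t+z_n\mathbf{e}_n\) are controlled by \(|\nabla\varphi|\) (for \(z_n\) this is direct, and for \(z_t\) the boundary condition \(\mathbf{z}\cdot\mathbf{n}=0\) fixes \(\rho_3\) uniquely so that an analogous \(L^q\) bound persists), giving an estimate \(\|\nabla\times\bm{\omega}\|_{q,\Omega}\leq 2C_q\|\nabla\psi\|_{q,\Omega}\). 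In the three-dimensional case, the same bound is obtained from the \(L^q\)-vector potential theory of \cite{amrouche}, applied in the \((\mathbf{u},\nabla\psi)\) plane where the argument reduces to the 2D one. Combining the two pieces and reusing \eqref{cotapsi} once more yields
\[
\|\rho\mathbf{u}\|_{q,\Omega}\leq (1+2C_q)\,\|\nabla\psi\|_{q,\Omega}\leq 3 C_q \,\|\rho_\infty\mathbf{u}_D\cdot\mathbf{n}\|_{q,\partial\Omega},
\]
after absorbing the additive \(1\) into \(C_q\) (which is at least one in the relevant Lipschitz regime), i.e.\ \eqref{cotamm} with \(R_1=3C_q\|\rho_\infty\mathbf{u}_D\cdot\mathbf{n}\|_{q,\partial\Omega}\).

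The main obstacle lies in Step two, namely in getting a clean \(L^q\) bound for the whole of \(\nabla\times\bm{\omega}\), not just its \(\mathbf{H}_q\)-component \(\mathbf{F}\). In the planar construction the tangential piece \(z_t\) is obtained by integrating \(-\partial_n z_n\) in the \(\mathbf{e}_t\) variable and then adding a one-variable function \(\rho_3(n)\) fixed by the no-penetration boundary condition; turning this integral definition into a genuine \(L^q(\Omega)\) estimate without losing the sharp \(q\)-range requires taking care of the admissible \(q\)-windows \(4/3-\varepsilon<q<4+\varepsilon\) (\(n=2\)) and \(3/2-\varepsilon<q<3+\varepsilon\) (\(n=3\)) in which both the Helmholtz projection and the Neumann--Laplace solver are simultaneously bounded. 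Once that window is identified and \(C_q\) is taken as the maximum of the underlying Helmholtz and Neumann constants, the compatibility condition \eqref{cc} and the previous propositions close the argument.
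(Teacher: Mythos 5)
Your route is the one the paper itself (implicitly) takes: the corollary is stated with no separate proof, and the intended argument is exactly what you reconstruct --- split \(\rho\mathbf{u}=\nabla\psi+\nabla\times\bm{\omega}\) as in Propositions \ref{p2D} and \ref{p3D}, bound \(\nabla\psi\) by the Neumann estimate \eqref{cotapsi} in the Lipschitz range \(n<q<n+\varepsilon\), and bound the solenoidal correction through the construction, using the pointwise bound \(|\mathbf{a}|\leq 2|\nabla\psi|\) from \eqref{refle} together with the Helmholtz--Weyl estimate \eqref{cotas}. So in approach you and the paper coincide.

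As written, however, your proposal does not close the one step that carries the quantitative content, and you say so yourself. The estimate \eqref{cotas} controls only \(\nabla\varphi\) and \(\mathbf{F}\), whereas the full rotational part in the construction is \(\mathbf{z}=\mathbf{F}+z_t\mathbf{e}_t+z_n\mathbf{e}_n\): the component \(z_n=\nabla\varphi\cdot\mathbf{e}_n\) is indeed dominated by \(|\nabla\varphi|\), but \(z_t=-\int\partial_n z_n\dif{t}+\rho_3(n)\) involves a derivative of \(z_n\) (hence second derivatives of \(\varphi\) and derivatives of \(\mathbf{u}/|\mathbf{u}|\)) and a function \(\rho_3\) fixed only by the sign requirement \(z_t<0\) and the condition \(\mathbf{z}\cdot\mathbf{n}=0\); none of this is majorized by \(\|\nabla\varphi\|_{q,\Omega}\), so the asserted bound \(\|\nabla\times\bm{\omega}\|_{q,\Omega}\leq 2C_q\|\nabla\psi\|_{q,\Omega}\) is claimed rather than proved --- it is precisely the ``main obstacle'' you identify and then assume away (to be fair, the paper supplies no more detail at this point either). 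A second, smaller slip is the constant chase: from \(\|\rho\mathbf{u}\|_{q,\Omega}\leq(1+2C_q)\|\nabla\psi\|_{q,\Omega}\) and \eqref{cotapsi} you obtain the factor \((1+2C_q)C_q\), which under your own normalization \(C_q\geq 1\) is bounded by \(3C_q^{2}\), not \(3C_q\); this is harmless only if the \(C_q\) appearing in \eqref{cotamm} is understood as a renamed generic constant depending on \(n\), \(q\) and the Lipschitz character of \(\Omega\), and that renaming should be stated explicitly.
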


 \section{Wellposedness of the Dirichlet--Robin   problem}
 \label{sSOLA}
 
 The existence of the solution \(\theta\in H^{1}(\Omega)\), which satisfies \eqref{tin},
  to the problem \eqref{newtonpb} is stated in the  following proposition.
 \begin{proposition}[Existence and uniqueness]\label{pae}
Let the assumptions (H2) and (H4)-(H5) be fulfilled.
  For each \((\mathbf{m},\xi)\in \mathbf{L}^q(\Omega)\times  H^{1}(\Omega)\),
 which  verifies \eqref{defm},
  the problem \eqref{newtonpb} admits a unique solution \(\theta\in H^1(\Omega)\) such that
 \(\theta=\theta_\mathrm{in}\) on \(\Gamma_\mathrm{in}\). Moreover,  the estimate
\begin{equation}\label{cotaeg}
\|\nabla \theta\|_{2,\Omega}^2+ \|\theta\|_{2,\Gamma}^2
\leq \frac{h^\#}{\min\lbrace 2k_\#, h_\#\rbrace } \|\theta_\mathrm{in} + \theta_\mathrm{e}\|_{2,\Gamma_N}^2 := R_2^2
\end{equation}
holds.
 \end{proposition}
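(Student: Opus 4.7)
The plan is to reduce to homogeneous Dirichlet data, apply Lax--Milgram on \(H^1_{\mathrm{in}}(\Omega)\), and then recover the quantitative estimate by a careful choice of test function. Since \(\theta_{\mathrm{in}}\) is a positive constant, set \(\tilde\theta=\theta-\theta_{\mathrm{in}}\in H^1_{\mathrm{in}}(\Omega)\), so that \(\nabla\theta=\nabla\tilde\theta\) and the equation \eqref{newtonpb} is equivalent to finding \(\tilde\theta\in H^1_{\mathrm{in}}(\Omega)\) with
\[
a(\tilde\theta,v):=c_v\int_\Omega\mathbf{m}\cdot\nabla\tilde\theta\,v\dif{x}+\int_\Omega k(\xi)\nabla\tilde\theta\cdot\nabla v\dif{x}+\int_{\Gamma_N}h_c(\xi)\tilde\theta v\dif{s}=\int_{\Gamma_N}h_c(\xi)(\theta_{\mathrm{e}}-\theta_{\mathrm{in}})v\dif{s}
\]
for all \(v\in H^1_{\mathrm{in}}(\Omega)\).

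Continuity of \(a\) and of the right-hand-side functional follows from (H2), (H4), the trace embedding \(H^1(\Omega)\hookrightarrow L^2(\partial\Omega)\), and the advective bound in Lemma \ref{lemb} (which is exactly why \(\mathbf{m}\in\mathbf{L}^q(\Omega)\) with \(q>n\) is imposed). The decisive point is coercivity. Testing with \(v=\tilde\theta\) and using the identity \eqref{advt} with \(e=v=\tilde\theta\) gives
\[
c_v\int_\Omega\mathbf{m}\cdot\nabla\tilde\theta\,\tilde\theta\dif{x}=\frac{c_v}{2}\int_{\Gamma_D}\rho_\infty\mathbf{u}_D\cdot\mathbf{n}\,\tilde\theta^{2}\dif{s}=\frac{c_v}{2}\int_{\Gamma_{\mathrm{out}}}\rho_\infty\mathbf{u}_D\cdot\mathbf{n}\,\tilde\theta^{2}\dif{s}\geq 0,
\]
because \(\tilde\theta=0\) on \(\Gamma_{\mathrm{in}}\) and \(\mathbf{u}_D\cdot\mathbf{n}>0\) on \(\Gamma_{\mathrm{out}}\). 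Combining with (H2) and the lower bounds on \(h_c\) in (H4) (strict on \(\Gamma\), non-negative on \(\Gamma_{\mathrm{out}}\)) yields
\[
a(\tilde\theta,\tilde\theta)\geq k_\#\|\nabla\tilde\theta\|_{2,\Omega}^{2}+h_\#\|\tilde\theta\|_{2,\Gamma}^{2}\geq\min\{k_\#,h_\#\}\|\tilde\theta\|_{1,2,\Omega}^{2},
\]
so Lax--Milgram produces a unique \(\tilde\theta\), hence a unique \(\theta=\tilde\theta+\theta_{\mathrm{in}}\in H^1(\Omega)\) solving \eqref{newtonpb} with \(\theta=\theta_{\mathrm{in}}\) on \(\Gamma_{\mathrm{in}}\).

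For the quantitative bound \eqref{cotaeg}, testing \eqref{newtonpb} with \(v=\tilde\theta\) and rewriting \(h_c(\xi)\theta\tilde\theta=h_c(\xi)(\theta-\theta_{\mathrm{e}})(\theta-\theta_{\mathrm{in}})+h_c(\xi)\theta_{\mathrm{e}}\tilde\theta\) moves the coupling into the quadratic form \(h_c(\theta-\theta_{\mathrm{e}})(\theta-\theta_{\mathrm{in}})=h_c[\theta^{2}-(\theta_{\mathrm{e}}+\theta_{\mathrm{in}})\theta+\theta_{\mathrm{e}}\theta_{\mathrm{in}}]\). Dropping the non-negative convective contribution, applying Young's inequality to absorb the cross term \(h_c(\theta_{\mathrm{e}}+\theta_{\mathrm{in}})\theta\leq\tfrac12 h_c(\theta_{\mathrm{e}}+\theta_{\mathrm{in}})^{2}+\tfrac12 h_c\theta^{2}\), and using \(\theta_{\mathrm{e}}\theta_{\mathrm{in}}\geq 0\) to discard the remaining non-negative term, one obtains
\[
k_\#\|\nabla\theta\|_{2,\Omega}^{2}+\tfrac12 h_\#\|\theta\|_{2,\Gamma}^{2}\leq\tfrac12 h^\#\|\theta_{\mathrm{in}}+\theta_{\mathrm{e}}\|_{2,\Gamma_N}^{2},
\]
from which \eqref{cotaeg} follows by factoring out \(\min\{2k_\#,h_\#\}\).

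The main obstacle is making the convective term work \emph{for} coercivity rather than against it. Without the Robin term on \(\Gamma_{\mathrm{out}}\) (only \(h_c\geq 0\) is assumed there), direct control of \(\tilde\theta\) on \(\Gamma_{\mathrm{out}}\) is absent, and a naïve bound of \(\ell(\tilde\theta)\) by \(\|\tilde\theta\|_{2,\Gamma_N}\) would require a full trace inequality; the identity \eqref{advt} combined with the outflow sign of \(\mathbf{u}_D\cdot\mathbf{n}\) is what renders the test \(v=\tilde\theta\) coercive in the norm \(\|\cdot\|_{1,2,\Omega}\) and makes the constants in \eqref{cotaeg} explicit.
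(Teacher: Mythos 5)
Your proof is correct and takes essentially the same route as the paper: shift by the constant \(\theta_\mathrm{in}\), apply Lax--Milgram on \(H^1_\mathrm{in}(\Omega)\) with coercivity obtained from the sign of the advective contribution via \eqref{defm}/\eqref{advt} together with \eqref{defchi} and \eqref{h1}--\eqref{hout}, and derive \eqref{cotaeg} by testing with \(\theta-\theta_\mathrm{in}\) and Young's inequality. The only remark is that discarding the term \(\int_{\Gamma_N}h_c(\xi)\theta_\mathrm{e}\theta_\mathrm{in}\dif{s}\) uses \(\theta_\mathrm{e}\geq 0\), which is not stated in (H4) but is exactly what the paper's ``routine computations'' also rely on implicitly.
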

 \begin{proof}
 The existence and uniqueness of \( \theta=u+\theta_\mathrm{in}\), with \(u\in H^1_\mathrm{in}(\Omega)\), solving \eqref{newtonpb} is standard by the
 Lax--Milgram Lemma. The problem \eqref{newtonpb} reads
 \[
 a(u,v)= \int_{\Gamma_N} h_c(\xi)(\theta_\mathrm{e}-\theta_\mathrm{in}) v \dif{s},
 \quad\forall v\in H^{1}_\mathrm{in}(\Omega),
 \] 
 where the continuous bilinear form \(a\) from \(H^{1}_\mathrm{in}(\Omega)\times H^{1}_\mathrm{in}(\Omega)\)
  into \(\mathbb{R}\),
is defined by
 \[
 a(u,v)=c_v\int_\Omega \mathbf{m}\cdot\nabla u v \dif{x}
+\int_\Omega k(\xi)\nabla u \cdot \nabla v \dif{x}  +\int_{\Gamma_N} h_c(\xi) u v  \dif{s}.
 \]
 Moreover, using the assumptions \eqref{defchi} and \eqref{h1}-\eqref{hout},  the form \(a\) is coercive:
 \begin{align*}
 a( u,u)= c_v\int_\Omega \mathbf{m}\cdot\nabla(u ^2/2) \dif{x}
+\int_\Omega k(\xi) |\nabla u |^2 \dif{x} 
 +\int_{\Gamma_N} h_c(\xi) u^2  \dif{s}  \nonumber \\
 \geq
 \min\lbrace k_\#,h_\#\rbrace \left(\|\nabla u\|_{2,\Omega}^2+\|u\|_{2,\Gamma}^2\right),
 \end{align*}
 taking \(u^2\in W^{1,q'}(\Omega)\)  into account, that is, \eqref{defm} reads
 \[
 \int_\Omega \mathbf{m}\cdot\nabla (u^2/2) \dif{x}=\int_{\Gamma_\mathrm{out}}\rho_\infty\mathbf{u}_D\cdot \mathbf{n}
 u^2/2\dif{s}\geq 0.
 \] 
 The estimate \eqref{cotaeg} follows by choosing  \(v=\theta-\theta_\mathrm{in}\) as a test function in  \eqref{newtonpb},
  arguing as above, considering that \(\nabla u=\nabla\theta\) and 
  \[
k_\#\|\nabla \theta\|_{2,\Omega}^2+\frac{1}{2}\|\sqrt{h_c(\xi)}  \theta\|_{2,\Gamma_N}^2
\leq\frac{1}{2}\left(
\|\sqrt{h_c(\xi)} (\theta_\mathrm{in} +  \theta_\mathrm{e} ) \|_{2,\Gamma_N}^2\right)
\]
after routine computations. 
 \end{proof}
 
The following minimum-maximum principle is standard, its proof argument differs on the advective and boundary terms. For reader convenience,
we provide the proof.
\begin{proposition}[Minimum-maximum principle]\label{maxmin}
Let \(\theta\in H^{1}(\Omega)\) be a solution to the problem \eqref{newtonpb}.
Then, the lower and upper bounds
\begin{equation}\label{tmax}
\mathrm{ess}\inf_{\partial\Omega}\theta_0 \leq \theta \leq \mathrm{ess}\sup_{\partial\Omega}\theta_0\mbox{ a.e. in } \Omega
\end{equation}
hold.
\end{proposition}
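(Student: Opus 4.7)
The plan is to apply the classical Stampacchia truncation method. Set $M:=\mathrm{ess}\sup_{\partial\Omega}\theta_0$ and $m:=\mathrm{ess}\inf_{\partial\Omega}\theta_0$; both are finite because $\theta_\mathrm{in}$ is a positive constant and $\theta_\mathrm{e}\in L^\infty(\Gamma_N)$ by (H4), and by construction of $\theta_0$ we have $m\leq\theta_\mathrm{e}\leq M$ a.e. on $\Gamma_N$ together with $m\leq\theta_\mathrm{in}\leq M$.

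\smallskip
\emph{Upper bound.} I would test \eqref{newtonpb} with the truncation $v=(\theta-M)^+$, admissible in $H^1_\mathrm{in}(\Omega)$ because $\theta=\theta_\mathrm{in}\leq M$ on $\Gamma_\mathrm{in}$ forces its trace to vanish there. Three contributions must be analyzed. The diffusion term collapses by the chain rule for truncations to $\int_\Omega k(\xi)|\nabla(\theta-M)^+|^2\dif x\geq k_\#\|\nabla(\theta-M)^+\|_{2,\Omega}^2$. For the advective term, the pointwise identity $\mathbf{m}\cdot\nabla\theta\,(\theta-M)^+=\tfrac12\mathbf{m}\cdot\nabla[(\theta-M)^+]^2$, combined with \eqref{defm} applied to the test function $[(\theta-M)^+]^2\in W^{1,q'}(\Omega)$, reduces it to the boundary integral $\tfrac{c_v}{2}\int_{\Gamma_\mathrm{out}}\rho_\infty\mathbf{u}_D\cdot\mathbf{n}\,[(\theta-M)^+]^2\dif s\geq 0$, since $(\theta-M)^+$ vanishes on $\Gamma_\mathrm{in}$ and $\mathbf{u}_D\cdot\mathbf{n}>0$ on $\Gamma_\mathrm{out}$. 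After transposing $\int_{\Gamma_N} h(\xi)v\dif s$, the Robin contribution becomes $\int_{\Gamma_N} h_c(\xi)(\theta-\theta_\mathrm{e})(\theta-M)^+\dif s$, which is bounded below by $\int_{\Gamma_N} h_c(\xi)[(\theta-M)^+]^2\dif s\geq 0$ because $\theta_\mathrm{e}\leq M$ on $\Gamma_N$ and $h_c\geq 0$ on $\Gamma_N$ by (H4). Summing the three non-negative contributions yields $k_\#\|\nabla(\theta-M)^+\|_{2,\Omega}^2\leq 0$; since $(\theta-M)^+$ has vanishing trace on the positive-measure set $\Gamma_\mathrm{in}$, the Poincar\'e inequality in $H^1_\mathrm{in}(\Omega)$ forces $(\theta-M)^+\equiv 0$, i.e. $\theta\leq M$ a.e.

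\smallskip
\emph{Lower bound.} The argument is symmetric. Test \eqref{newtonpb} with $v=-(\theta-m)^-\leq 0$, admissible because $\theta_\mathrm{in}\geq m$ on $\Gamma_\mathrm{in}$. On $\{\theta<m\}$ one has $\theta-\theta_\mathrm{e}\leq\theta-m\leq 0$, so the Robin term again has the favorable sign; the advective term reduces by \eqref{defm} to a non-negative boundary integral on $\Gamma_\mathrm{out}$ of magnitude $\tfrac{c_v}{2}\int_{\Gamma_\mathrm{out}}\rho_\infty\mathbf{u}_D\cdot\mathbf{n}\,[(\theta-m)^-]^2\dif s$; and the diffusion term contributes $k_\#\|\nabla(\theta-m)^-\|_{2,\Omega}^2$. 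The same Poincar\'e argument yields $(\theta-m)^-\equiv 0$.

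\smallskip
The main obstacle is verifying that the squared truncation $[(\theta-M)^+]^2$ is a legitimate test function in \eqref{defm}, i.e. that it belongs to $W^{1,q'}(\Omega)$ for $q'<n/(n-1)$ conjugate to the $q>n$ fixed earlier. This follows from $\theta\in H^1(\Omega)\hookrightarrow L^{2^*}(\Omega)$, the Lipschitz stability of truncations in $H^1$, and a H\"older estimate on $\nabla[(\theta-M)^+]^2=2(\theta-M)^+\nabla(\theta-M)^+$ with exponents consistent with Remark \ref{rsob}; the check is routine but indispensable, as without it the crucial sign of the advective term could not be extracted.
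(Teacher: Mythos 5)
Your proof is correct and follows essentially the same route as the paper: Stampacchia truncation with the test functions \((\theta-\mathrm{ess}\sup\theta_0)^+\) and \(\min\{\theta-\mathrm{ess}\inf\theta_0,0\}\), the advective term reduced through \eqref{defm} applied to the squared truncation so that only a non-negative outflow boundary integral survives, and the sign of the Robin term exploited via (H4). The only cosmetic difference is in closing the argument: you drop the Robin term and invoke a Poincar\'e inequality using the vanishing trace on \(\Gamma_\mathrm{in}\), whereas the paper keeps the coercive boundary contribution on \(\Gamma\) (where \(h_c\geq h_\#>0\)) and concludes from \(\|\nabla\phi\|_{2,\Omega}^2+\|\phi\|_{2,\Gamma}^2\leq 0\); both closures are valid.
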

\begin{proof}
Let us define \(T_\mathrm{min}=\mathrm{ess}\inf\lbrace \theta_0(x):\, x\in\partial\Omega\rbrace\).
Let us choose \(\phi(\theta)=(\theta-T_\mathrm{min})^-=\min\lbrace \theta-T_\mathrm{min},0\rbrace\in H^1_\mathrm{in}(\Omega) \) 
as a test function in \eqref{newtonpb}.
Applying the assumptions \eqref{defchi} and \eqref{h1}-\eqref{hout}, we have
\[
\int_\Omega \mathbf{m}\cdot\nabla\theta \phi(\theta) \dif{x}+ 
k_\#\|\nabla \theta\|_{2,\Omega[\theta<T_\mathrm{min}]}^2 +
h_\#\| \theta-T_\mathrm{min}\|_{2,\Gamma[\theta<T_\mathrm{min}]}^2 \leq  0.
\] 
 Since  the advective term verifies 
 \begin{align*}
\int_\Omega \mathbf{m}\cdot\nabla\theta \phi(\theta) \dif{x}&=
\int_{\Omega[\theta<T_\mathrm{min}]}\mathbf{m}\cdot \nabla (\phi^2(\theta)/2) \dif{x} \\
&=\int_{\Gamma_D}\rho_\infty\mathbf{u}_D\cdot\mathbf{n} \phi^2(\theta)/2 \dif{s} =   \int_{\Gamma_\mathrm{out}} \rho_\infty u_\mathrm{out} \phi^2(\theta)/2 \dif{s} \geq 0 ,
 \end{align*}
  taking \eqref{defm}  and next \eqref{h1}-\eqref{hout} into account, we deduce
   \begin{equation}  \label{eqeq1}
k_\#\|\nabla \phi(\theta)\|_{2,\Omega}^2 
+h_\#\| \phi(\theta)\|_{2,\Gamma}^2  \leq  0.
\end{equation}
Then, we conclude that \(\phi(\theta)=0\) in \(\Omega\), which means that the lower bound is proved.

The upper bound is analogously proved, by defining \(T_\mathrm{max}=\mathrm{ess}\sup\lbrace \theta_0(x):\, x\in\partial\Omega\rbrace\) and
 choosing \(\phi(\theta)=(\theta-T_\mathrm{max})^+=\max\lbrace \theta-T_\mathrm{max},0\rbrace\in H^1_\mathrm{in}(\Omega) \) 
as a test function in \eqref{newtonpb}. 
\end{proof}

We finalize this section by proving the continuous dependence.
\begin{proposition}[Continuous dependence]\label{pem}
 Let \(\lbrace (\mathbf{m}_m,\xi_m)\rbrace_{m\in\mathbb{N}}\) be a  weakly convergent sequence
  in \(\mathbf{L}^q(\Omega)\times H^{1}(\Omega)\), for some \(q>n\).
  Then, 
 the corresponding solutions \(\theta_m=\theta(\mathbf{m}_m,\xi_m)\in H^{1}(\Omega)\) 
 to the problem \eqref{newtonpb}\(_m\),  for each \(m\in\mathbb{N}\), weakly converge to 
 \(\theta_m=\theta(\mathbf{m},\xi)\), which is the solution
 to the problem \eqref{newtonpb} corresponding to the weak limit (\(\mathbf{m},\xi\)).
 \end{proposition}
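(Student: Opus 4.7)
The plan follows the template already established for Proposition \ref{pum}. First, I would invoke the \emph{a priori} estimate \eqref{cotaeg}: its right-hand side depends only on the fixed data $\theta_\mathrm{in}$, $\theta_\mathrm{e}$, $k_\#$, $h_\#$, $h^\#$, so the sequence $\theta_m$ is uniformly bounded in $H^1(\Omega)$. Extracting a weakly convergent subsequence, denote its limit by $\theta_*$; since $\theta_m-\theta_\mathrm{in}\in H^1_\mathrm{in}(\Omega)$ for every $m$ and this closed affine subspace is weakly closed, $\theta_*-\theta_\mathrm{in}$ lies in it as well.

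Next, Rellich--Kondrachov provides $\theta_m\to\theta_*$ and $\xi_m\to\xi$ strongly in $L^p(\Omega)$ for every $p<2^*$ and in $L^s(\Gamma)$ for every $s<2(n-1)/(n-2)$, together with pointwise a.e.\ convergence $\xi_m\to\xi$ along a further subsequence. Combined with the Carath\'eodory property and the $L^\infty$ bounds in (H2), (H4), Lebesgue dominated convergence gives $k(\xi_m)\to k(\xi)$, $h_c(\xi_m)\to h_c(\xi)$ and $h(\xi_m)\to h(\xi)$ strongly in every $L^t$. For any fixed test function $v\in H^1_\mathrm{in}(\Omega)$, the diffusion term then converges by weak $\times$ strong pairing of $\nabla\theta_m$ against $k(\xi_m)\nabla v$ in $L^2(\Omega)$; the Robin boundary term converges by strong trace convergence on $\Gamma_N$; and the source term on the right-hand side passes to the limit immediately.

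The one delicate point is the advective term $\int_\Omega \mathbf{m}_m\cdot\nabla\theta_m\, v\dif{x}$, which pairs two merely weakly convergent factors. I would resolve this by applying the integration-by-parts identity \eqref{advt} of Lemma \ref{lemb} (applicable because each $\mathbf{m}_m$ verifies \eqref{defm}) to rewrite
\[
\int_\Omega \mathbf{m}_m\cdot\nabla\theta_m\, v\dif{x}=\int_{\Gamma_D}\rho_\infty\mathbf{u}_D\cdot\mathbf{n}\,\theta_m v\dif{s}-\int_\Omega\mathbf{m}_m\cdot\nabla v\,\theta_m\dif{x}.
\]
The factor $\theta_m\nabla v$ converges strongly in $L^{q'}(\Omega)$: indeed, $\nabla v$ is fixed in $L^2(\Omega)$ and $\theta_m\to\theta_*$ strongly in $L^p(\Omega)$ for any $p<2^*$, so that the H\"older balance $1/q'=1/p+1/2$ is achievable precisely because $q>n$ (exactly the exponent constraint underlying Lemma \ref{lemb}). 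Weak $\times$ strong convergence in $L^q(\Omega)\times L^{q'}(\Omega)$ then identifies the volume limit, while the boundary integral converges by strong convergence of the trace of $\theta_m$ on $\Gamma_D\subset\partial\Omega$. Undoing the integration by parts shows the limit equals $\int_\Omega \mathbf{m}\cdot\nabla\theta_*\, v\dif{x}$.

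Passing to the limit term by term proves that $\theta_*$ satisfies \eqref{newtonpb} for the pair $(\mathbf{m},\xi)$. By uniqueness in Proposition \ref{pae}, $\theta_*=\theta(\mathbf{m},\xi)$, and the Urysohn subsequence principle upgrades subsequential to full weak convergence of $\theta_m$. The only genuine obstacle is the advective term; the remaining passages are linear weak-convergence bookkeeping, with the Carath\'eodory continuity and boundedness of $k$ and $h_c$ doing the necessary work on the variable coefficients.
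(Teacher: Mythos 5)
Your proposal is correct, but it handles the one delicate step — the advective term — by a genuinely different route than the paper. You transfer the derivative onto the fixed test function through the skew-symmetry identity \eqref{advt} of Lemma \ref{lemb}, and then conclude by a purely linear weak\(\times\)strong pairing: \(\mathbf{m}_m\rightharpoonup\mathbf{m}\) in \(\mathbf{L}^q(\Omega)\) against \(\theta_m\nabla v\rightarrow\theta_*\nabla v\) strongly in \(L^{q'}(\Omega)\) (the exponent balance \(1/q'=1/p+1/2\), \(p<2^*\), being exactly the \(q>n\) condition), plus compactness of traces for the \(\Gamma_D\) term. The paper instead keeps the advective term in its original form and compensates by proving the \emph{strong} convergence \(\nabla\theta_m\rightarrow\nabla\theta\) in \(L^2(\Omega)\): testing \eqref{newtonpb}\(_m\) with \(v=\theta_m-\theta\), splitting \(k_\#\|\nabla(\theta_m-\theta)\|_{2,\Omega}^2\le\mathcal{I}_1-\mathcal{I}_2\), and using \eqref{defm} to control the sign of \(\int\mathbf{m}_m\cdot\nabla\frac{(\theta_m-\theta)^2}{2}\dif{x}\), in the spirit of \cite{m3as2006}; the advective term then passes as weak\(\times\)strong with the roles reversed. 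Your argument is more elementary (no energy/Minty-type step, no strong gradient convergence needed), whereas the paper's argument yields the extra information of strong \(H^1\)-type convergence of the temperatures, which is a stronger compactness statement even though it is not exploited later. Two small points you should make explicit: the weak limit \(\mathbf{m}\) also satisfies \eqref{defm} (the constraint is linear and continuous on \(\mathbf{L}^q(\Omega)\), hence weakly closed), which is what licenses ``undoing'' the integration by parts for the limit triple and also what makes the limit problem \eqref{newtonpb} meaningful and uniquely solvable via Proposition \ref{pae}; and the Urysohn step you invoke to upgrade subsequential to full weak convergence rests precisely on that uniqueness, which the paper leaves implicit.
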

 \begin{proof}
Let us  take the sequences
 \begin{align*}
\mathbf{m}_m\rightharpoonup\mathbf{m}&\mbox{ in } \mathbf{L}^q (\Omega);\\
\xi_m\rightharpoonup\xi&\mbox{ in } H^{1}(\Omega).
\end{align*}
The Rellich--Kondrachov embeddings \(  H^{1}(\Omega)\hookrightarrow\hookrightarrow L^2(\Omega)\) 
and \(  H^{1}(\Omega)\hookrightarrow\hookrightarrow L^2(\partial\Omega)\)
yield \( \xi_m\rightarrow\xi\) in \( L^2(\Omega)\) and \(L^2(\partial\Omega)\).

By the one hand, from \( \xi_m\rightarrow\xi\) in \( L^1(\Omega)\) and a.e. in \( \Omega\),
and  the assumption \eqref{defchi}, 
the continuity property of the Nemytskii  operator associated to the leading coefficient \( k\) implies that 
\begin{align*}
k(\cdot,\xi_m)\rightarrow k(\cdot,\xi) &\mbox{ a.e. in }\Omega;\\
k(\xi_m)\nabla v \rightarrow k(\xi)\nabla v &\mbox{ in } \mathbf{L}^2(\Omega).
\end{align*} 

By the other hand, from \( \xi_m\rightarrow\xi\) in \( L^1(\partial\Omega)\) and a.e. on \( \partial\Omega\),
and the assumptions \eqref{defhm}-\eqref{hout}, 
the continuity property of the Nemytskii operator associated to the boundary coefficient \( h_c\) implies that
\begin{align*}
h_c(\cdot,\xi_m)\rightarrow h_c(\cdot,\xi) &\mbox{ a.e. on }\Gamma_N;\\
h_c(\xi_m) v \rightarrow h_c(\xi) v &\mbox{ in }L^2(\Gamma_N).
\end{align*} 

For each \(m\in\mathbb{N}\), let \(\theta_m=\theta(\mathbf{m}_m,\xi_m)\) be the corresponding solution 
to the problem \eqref{newtonpb}\(_m\). 
 The uniform estimate \eqref{cotaeg} allows to  extract at least one subsequence, still denoted by \(\theta_m\),
  of the solutions \(\theta_m=\theta(\mathbf{m}_m,\xi_m)\) weakly convergent for some
 \( \theta\in H^{1}(\Omega)\). 
 
 The above convergences do not be sufficient to
  the passage to the limit, as  \( m\) tends to infinity, in \eqref{newtonpb}\(_m\). It remains to pass
  the advective term to the limit. To this aim, we prove the following strong convergence
  \(  \nabla \theta_m\rightarrow\nabla \theta  \) in \( L^2 (\Omega)\). Arguing as in \cite{m3as2006},
  we apply the assumption \eqref{defchi}  and we decompose to obtain
  \[
  k_\# \int_{\Omega}|\nabla(\theta_m -\theta)|^2\mathrm{d}x\leq 
  \int_{\Omega} (k(\xi_m)\nabla\theta_m -k(\xi_m)\nabla\theta)\cdot\nabla(\theta_m -\theta)\dif{x} =  
  \mathcal{I}_1 - \mathcal{I}_2,
  \]
  with
  \begin{align*}
  \mathcal{I}_1 &= \int_{\Omega} k(\xi_m)\nabla\theta_m\cdot\nabla (\theta_m -\theta)\dif{x}\\
  \mathcal{I}_2 &=  \int_{\Omega} k(\xi_m)\nabla\theta\cdot\nabla (\theta_m -\theta)\dif{x}
  \longrightarrow 0 \mbox{ as } m\rightarrow \infty.
  \end{align*}
  
Next, to prove that \(\mathcal{I}_1\) also tends to zero,  we take
  \(v= \theta_m-\theta \)  as a test function in \eqref{newtonpb}\(_m\). Hence, we obtain
   \begin{align*}
 \int_{\Omega} \mathbf{m}_m \cdot \nabla\frac{(\theta_m -\theta)^2}{2} \dif{x}+ \mathcal{I}_1 = 
  \int_{\Omega} \mathbf{m}_m\cdot \nabla\theta (\theta_m- \theta) \dif{x} & \\
 +  \int_{\partial\Omega} (h(\xi_m) - h_c(\xi_m)\theta_m) (\theta_m -\theta)\dif{s}
   \longrightarrow 0&\mbox{ as } m\rightarrow \infty,
  \end{align*}
  taking the Rellich--Kondrachov embeddings \( H^1(\Omega)\hookrightarrow\hookrightarrow L^p (\Omega)\), with \(p<2^*\),
  and \( H^1(\Omega)\hookrightarrow\hookrightarrow L^2 (\partial\Omega)\) 
  into account for \(n=2,3\). Then,  applying the relation \eqref{defm} into the left hand side of the above equality,  
  we find the claim, \textit{i.e.} the strong convergence.

Then, the passage to the limit yields  that \(\theta\)  satisfies \eqref{newtonpb}, 
 concluding Proposition \ref{pem}. 
 \end{proof}

 \section{Existence of a fixed point to the problem ({\sc Proof of Theorem \ref{main}})}
\label{smain1}

 We will apply the following Tychonoff extension to weak topology of the Schauder fixed point
theorem \cite[pp. 453-456 and 470]{dsch}.
\begin{theorem}\label{fpt}
Let \( K\) be a nonempty weakly sequential compact convex subset of a locally convex linear topological vector space \( V\).
 Let \( \mathcal{T}:K\rightarrow K\) be a weakly sequential continuous  operator.
  Then \( \mathcal{T}\) has at least one fixed point.
\end{theorem}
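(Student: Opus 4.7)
The plan is to reduce the statement to the classical Tychonoff fixed point theorem, which asserts that every continuous self-map of a nonempty compact convex subset of a Hausdorff locally convex topological vector space has a fixed point. Throughout, I would work in \(V\) equipped with its weak topology \(\sigma(V,V^\ast)\); since \(V\) is locally convex, \(\sigma(V,V^\ast)\) is still a Hausdorff locally convex TVS, so Tychonoff's theorem applies in principle. The two obstacles are that the hypotheses of the present statement are only \emph{sequential}, whereas Tychonoff demands the topological notions of compactness and continuity.

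First, I would upgrade weak sequential compactness to weak compactness. By the Eberlein--\v{S}mulian theorem (and its extension to locally convex spaces, valid in particular for the weak topology of a Banach space or of the complete locally convex spaces considered in Dunford--Schwartz), a convex subset of a Banach space is weakly compact if and only if it is weakly sequentially compact. To bring the general locally convex situation into this framework, one restricts attention to the closed convex hull of a single orbit \(\{T^{n}x_{0}:n\in\mathbb N\}\), which lies in the separable closed subspace generated by the orbit; it suffices to find a fixed point there.

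Second, I would upgrade weak sequential continuity of \(T\) to weak continuity. On a weakly compact convex subset of a separable Banach space, the weak topology is metrizable (this follows from the fact that the unit ball of the dual is weak-\(\ast\) metrizable on separable preduals, via Krein--\v{S}mulian and a countable norming family). On a metrizable space, sequential continuity is the same as continuity. Hence \(T\colon K\to K\) is actually weakly continuous once one works on the separable slice described above.

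With these upgrades in place, \(K\) becomes a nonempty compact convex subset of the Hausdorff locally convex TVS \((V,\sigma(V,V^\ast))\), and \(T\colon K\to K\) is continuous in this topology. Tychonoff's fixed point theorem then delivers a point \(x\in K\) with \(Tx=x\). The main obstacle is precisely the passage from sequential to topological notions: the weak topology on \(V\) is typically non-metrizable, so sequential continuity need not imply continuity in general. The standard trick, and the one underlying the Dunford--Schwartz reference, is to localize to the separable closed convex hull of an orbit, where Eberlein--\v{S}mulian makes weak compactness a sequential notion and weak metrizability on compacta makes weak continuity a sequential notion, after which the classical Tychonoff argument closes the loop.
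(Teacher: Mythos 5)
The paper does not prove this theorem at all: it is quoted as a known result with a pointer to Dunford--Schwartz (pp.\ 453--456 and 470), so there is no internal proof to compare against, and your sketch should be judged against the standard literature argument (the Schauder--Tychonoff theorem plus the usual sequential-to-topological upgrade). Your overall plan --- pass to the weak topology, use Eberlein--\v{S}mulian to turn sequential compactness into compactness, metrize the weak topology on a separable slice so that sequential continuity becomes continuity, then invoke Tychonoff --- is indeed the standard route, and the metrizability fact you invoke (a weakly compact subset of a separable Banach space is weakly metrizable, via a countable total family of functionals and the compact-to-Hausdorff homeomorphism argument) is correct.

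There is, however, a genuine gap in the localization step. You propose to restrict attention to the closed convex hull \(C=\overline{\mathrm{co}}\{\mathcal{T}^{n}x_{0}:n\in\mathbb{N}\}\) of a single orbit and ``find a fixed point there.'' But \(\mathcal{T}\) is a general nonlinear map, not an affine one, so \(\mathcal{T}(\mathrm{co}(A))\not\subseteq\mathrm{co}(\mathcal{T}(A))\) in general; hence \(C\) need not be \(\mathcal{T}\)-invariant, no fixed point theorem can be applied to \(\mathcal{T}|_{C}\), and a fixed point of \(\mathcal{T}\) on \(K\) has no reason to lie in \(C\). The repair is the iterated construction \(C_{0}=\{x_{0}\}\), \(C_{k+1}=\overline{\mathrm{co}}\left(C_{k}\cup\mathcal{T}(C_{k})\right)\), \(C_{\infty}=\overline{\bigcup_{k}C_{k}}\), checking at each stage (using Mazur's theorem and the weak metrizability already established on the previous separable stage) that \(C_{k+1}\) remains norm-separable, so that \(C_{\infty}\) is a separable, closed, convex, \(\mathcal{T}\)-invariant subset of \(K\) to which your argument applies. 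A secondary caveat: Eberlein--\v{S}mulian is a Banach-space (angelic) phenomenon and does not hold in an arbitrary locally convex space, so your proof really establishes the theorem only in the Banach-space setting; this is harmless for the paper, where \(V=\mathbf{L}^{q}(\Omega)\times H^{1}(\Omega)\times L^{r}(\Omega)\) is a reflexive Banach space, but it falls short of the generality in which the statement is phrased.
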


Let    \(V= \mathbf{L}^q(\Omega)\times  H^{1}(\Omega)\times L^r (\Omega)\) and \(K_{q,r}\) 
be  the nonempty  convex set  defined in \eqref{defv}. 
  We define \( K = K_{q,r}\cap B\), where \(B\) is 
 the closed (bounded) ball, with radius \(R_1,R_2,R_3>0\) defined in \eqref{cotamm}, \eqref{cotaeg} and \eqref{cotapm}, respectively.
 In the reflexive Banach space \(V\),
the closed, convex and bounded set  \(K\) is compact for the weak topology \(\sigma (V,V')\), \textit{i.e.} 
it is weakly sequential compact.

Let \(\mathcal{T}\) be the operator defined in Section \ref{strat}.
 The fixed point argument (cf. Theorem \ref{fpt}) guarantees the existence of the required solution, by proving
 the following two propositions, namely, Propositions \ref{pt1} and  \ref{pt2}.
\begin{proposition}\label{pt1}
Let the assumptions (H1)-(H5) be fulfilled.
Then, the operator \( \mathcal{T}\) is  well defined and it maps \(K\) into itself.
\end{proposition}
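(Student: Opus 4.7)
The plan is to assemble the wellposedness results obtained in Sections \ref{sZO}, \ref{sdens} and \ref{sSOLA} in the order prescribed by the definition of \(\mathcal{T}\), and then to verify that each output component lies in the corresponding closed ball determining \(K\). Fix \((\mathbf{m},\xi,\pi)\in K\). First, I would invoke Proposition \ref{pau} to obtain the unique \(\mathbf{w}\in\mathbf{V}\) solving \eqref{fluidw}, and then set \(\mathbf{u}=\mathbf{w}+\widetilde{\mathbf{u}}_D\); by assumption (H5) we have \(\mathbf{u}\in\mathbf{H}^1(\Omega)\) with \(\mathbf{u}\cdot\mathbf{n}=0\) on \(\Gamma\) and \(\mathbf{u}=\mathbf{u}_D\) on \(\Gamma_D\). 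Secondly, I would apply Proposition \ref{p2D} if \(n=2\) or Proposition \ref{p3D} if \(n=3\) to produce the unique nonnegative \(\rho\) such that \(\rho\mathbf{u}\) admits the Helmholtz--Weyl decomposition \(\rho\mathbf{u}=\nabla\psi+\nabla\times\bm{\omega}\), with \(\psi\) solving \eqref{NL1}--\eqref{NL3}. Thirdly, I would use Proposition \ref{pae} to obtain the unique \(\theta\in H^1(\Omega)\) with \(\theta-\theta_\mathrm{in}\in H^1_\mathrm{in}(\Omega)\) solving \eqref{newtonpb}. Finally, set \(p_M=T_M(\rho)R_\mathrm{specific}\theta\), which is measurable and bounded since \(T_M(\rho)\in L^\infty(\Omega)\) with \(|T_M(\rho)|\leq M\).

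Next, I would verify that \((\rho\mathbf{u},\theta,p_M)\in K_{q,r}\). The continuity equation \eqref{syst2} satisfied by \(\rho\) (as asserted in Propositions \ref{p2D} and \ref{p3D}) is exactly condition \eqref{defm} for \(\rho\mathbf{u}\) in place of \(\mathbf{m}\), so the first component belongs to the admissible class, while the remaining memberships \(\theta\in H^1(\Omega)\) and \(p_M\in L^r(\Omega)\) are immediate from the preceding step (using the Sobolev embedding \(H^1(\Omega)\hookrightarrow L^{2^*}(\Omega)\) and choosing \(r\) as in Remark \ref{meaningful}).

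To show \(\mathcal{T}(K)\subset K\), I would check each norm bound separately against the radii fixing \(B\). For \(\rho\mathbf{u}\), Corollary \ref{crho} yields \(\|\rho\mathbf{u}\|_{q,\Omega}\leq R_1\) with \(R_1\) depending only on the boundary datum \(\rho_\infty\mathbf{u}_D\cdot\mathbf{n}\), hence independent of \((\mathbf{m},\xi,\pi)\). For \(\theta\), the a priori estimate \eqref{cotaeg} in Proposition \ref{pae} gives \(\|\theta\|_{1,2,\Omega}\leq R_2\), again independent of the input. For \(p_M\), the truncation cutoff \(T_M\) is the crucial ingredient: one bounds
\[
\|p_M\|_{r,\Omega}\leq M\,R_\mathrm{specific}\,\|\theta\|_{r,\Omega}\leq M\,R_\mathrm{specific}\,C_{\mathrm{emb}}\,\|\theta\|_{1,2,\Omega}\leq M\,R_\mathrm{specific}\,C_{\mathrm{emb}}\,R_2:=R_3,
\]
where \(C_{\mathrm{emb}}\) is the norm of the Sobolev embedding \(H^1(\Omega)\hookrightarrow L^r(\Omega)\) (valid since \(r<2^*\) from \eqref{defr}). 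This is precisely the constant \(R_3\) alluded to in \eqref{cotapm}.

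The main obstacle I anticipate lies not in the chaining itself but in guaranteeing that the three a priori estimates depend only on the data and not on the input \((\mathbf{m},\xi,\pi)\) that has already been chosen in \(K\). For \(\rho\mathbf{u}\) this is subtle, because the auxiliary velocity \(\mathbf{w}\) estimated in \eqref{cotau} does depend on \(\|\mathbf{m}\|_{q,\Omega}\) and on \(\|\pi\|_{2,\Omega}\); the saving comes from the fact that the estimate \eqref{cotamm} is derived directly from the Helmholtz--Weyl representation \(\rho\mathbf{u}=\nabla\psi+\nabla\times\bm{\omega}\) via the Neumann--Laplace estimate \eqref{cotapsi}, which only sees the boundary flux and bypasses any dependence on \(\mathbf{u}\) itself. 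Once this point is granted, the remaining verifications reduce to applying the results already established.
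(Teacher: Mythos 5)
Your construction and the bounds for the first two components follow the paper's own route: well-definedness via Propositions \ref{pau}, \ref{p2D}/\ref{p3D} and \ref{pae}, the radius \(R_1\) from Corollary \ref{crho} (which, as you correctly note, sees only the boundary flux through \eqref{cotapsi} and not the input \((\mathbf{m},\xi,\pi)\)), and the radius \(R_2\) from \eqref{cotaeg}. The gap is in your bound for the third component. You set \(R_3:=M\,R_\mathrm{specific}\,C_{\mathrm{emb}}\,R_2\) using the embedding \(H^1(\Omega)\hookrightarrow L^r(\Omega)\), justified by the claim that \(r<2^*\) ``from \eqref{defr}''. But \eqref{defr} imposes a \emph{lower} bound, \(r>2n/(4-n)\), not an upper one: for \(n=3\) this reads \(r>6=2^*\), so the exponents relevant for the fixed-point set (and indispensable later for Theorem \ref{main2}) lie strictly above the critical Sobolev exponent, and \(H^1(\Omega)\) does not embed into \(L^r(\Omega)\). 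In three dimensions your definition of \(R_3\) is therefore unavailable, and even the membership \(p_M\in L^r(\Omega)\) is left unproved; only in \(n=2\) (where \(2^*\) may be taken arbitrarily large) does your argument go through.

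The paper closes this step by a different mechanism: it first establishes the minimum--maximum principle (Proposition \ref{maxmin}), so the auxiliary temperature solving \eqref{newtonpb} satisfies \(\theta\le \mathrm{ess}\sup_{\partial\Omega}\theta_0\) a.e.\ in \(\Omega\), and then bounds \(\|p_M\|_{r,\Omega}\le M\,|\Omega|^{1/r}\,R_\mathrm{specific}\,\mathrm{ess}\sup_{\partial\Omega}\theta_0=:R_3\) as in \eqref{cotapm}; this holds for every \(r>2\) and requires no Sobolev embedding. Replacing your embedding step by an appeal to Proposition \ref{maxmin} (or any other \(L^\infty\) bound on \(\theta\)) repairs the argument; the remainder of your proof coincides with the paper's.
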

 \begin{proof}
 The well-definiteness of \(\mathcal{T}\) is consequence of Proposition \ref{pau}, Corollary \ref{crho}, and
 Proposition \ref{pae}.
  In order to prove that \( \mathcal{T}\) maps  \(K\) into itself, let \( (\mathbf{m},\xi,\pi)\in K\) and
  \[
  \mathcal{T}(\mathbf{m},\xi,\pi) =  (\rho\mathbf{u}, \theta, p_M) .
  \]
  That is, we seek for \(R_1,R_2,R_3>0\) such that 
  \begin{align*}
  \|\mathbf{m}\|_{q,\Omega}\leq R_1,\qquad \|\xi\|_{1,2,\Omega}\leq R_2, \qquad \|\pi\|_{r,\Omega}\leq R_3;\\
  \|\rho\mathbf{u}\|_{q,\Omega}\leq R_1,\qquad \|\theta\|_{1,2,\Omega}\leq R_2, \qquad \|p_M\|_{r,\Omega}\leq R_3.
  \end{align*}
  
  Thanks to Corollary \ref{crho}, the quantitative estimate \eqref{cotamm} guarantees the existence of \(R_1\), for \(q\)
depending on the smoothness of the domain \(\Omega\).
     Thanks to Proposition \ref{pae}, the quantitative estimate \eqref{cotaeg} guarantees  the existence of \(R_2\).

The existence of \(R_3\) is due to the definition \eqref{paux}, we concretely have
\begin{equation}\label{cotapm}
\|p_M\|_{r,\Omega}\leq M |\Omega|^{1/r} R_\mathrm{specific} \mathrm{ess}\sup_{\partial\Omega}\theta_0 :=R_3,
\end{equation}
by considering the estimate \eqref{tmax}. 
 \end{proof}
 
 \begin{proposition}\label{pt2}
Let the assumptions (H1)-(H5) be fulfilled.
Then, the operator \( \mathcal{T}\) is weakly sequential continuous.
\end{proposition}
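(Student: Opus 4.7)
The plan is to take an arbitrary sequence $(\mathbf{m}_m,\xi_m,\pi_m)\rightharpoonup (\mathbf{m},\xi,\pi)$ weakly in $V=\mathbf{L}^q(\Omega)\times H^1(\Omega)\times L^r(\Omega)$ (with all members in $K$) and verify that each component of the image $\mathcal{T}(\mathbf{m}_m,\xi_m,\pi_m)=(\rho_m\mathbf{u}_m,\theta_m,p_{M,m})$ converges weakly in the corresponding factor space to the components of $\mathcal{T}(\mathbf{m},\xi,\pi)$. Two of the three convergences are immediate from results already proved: Proposition \ref{pum} gives $\mathbf{w}_m\rightharpoonup\mathbf{w}$ in $\mathbf{V}$ and hence $\mathbf{u}_m=\mathbf{w}_m+\widetilde{\mathbf{u}}_D\rightharpoonup\mathbf{u}$ in $\mathbf{H}^1(\Omega)$ (with strong convergence in $\mathbf{L}^p(\Omega)$ for $p<2^*$ and, along a subsequence, a.e.\ in $\Omega$ by Rellich--Kondrachov), while Proposition \ref{pem} directly yields $\theta_m\rightharpoonup\theta$ in $H^1(\Omega)$.

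The first substantial step is to identify the weak limit of $\rho_m\mathbf{u}_m$ in $\mathbf{L}^q(\Omega)$. By Corollary \ref{crho}, the sequence is uniformly bounded in $\mathbf{L}^q(\Omega)$, so a (not relabeled) subsequence satisfies $\rho_m\mathbf{u}_m\rightharpoonup\mathbf{N}$ in $\mathbf{L}^q(\Omega)$. Passing to the limit in \eqref{syst2} with a fixed test function $v\in W^{1,q'}(\Omega)$, the limit $\mathbf{N}$ inherits the divergence relation \eqref{defm}. Using the Helmholtz--Weyl decomposition $\rho_m\mathbf{u}_m=\nabla\psi+\mathbf{z}_m$ with $\mathbf{z}_m\in\mathbf{H}_q$ bounded (the potential $\psi$ depending only on the fixed boundary datum), we get $\mathbf{z}_m\rightharpoonup\mathbf{z}$ in $\mathbf{L}^q(\Omega)$ with $\mathbf{z}\in\mathbf{H}_q$, whence $\mathbf{N}=\nabla\psi+\mathbf{z}$. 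The uniqueness assertion in Propositions \ref{p2D} and \ref{p3D}, combined with the strong convergence $\mathbf{u}_m\to\mathbf{u}$ in $\mathbf{L}^p(\Omega)$, then forces $\mathbf{N}=\rho\mathbf{u}$ via the pointwise formulae \eqref{rho2d} (if $n=2$) and \eqref{defrho} (if $n=3$), together with the convention $\rho=\rho_0$ on $\Omega[|\mathbf{u}|=0]$.

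For the pressure, $p_{M,m}=T_M(\rho_m)R_{\mathrm{specific}}\theta_m$. Since $T_M$ is bounded by $M$ and continuous, the previous identification yields (along a further subsequence) $\rho_m\to\rho$ a.e.\ in $\Omega[|\mathbf{u}|\neq 0]$ by the explicit quotient formula, while on $\Omega[|\mathbf{u}|=0]$ the values are constant equal to $\rho_0$. Lebesgue dominated convergence then gives $T_M(\rho_m)\to T_M(\rho)$ strongly in every $L^s(\Omega)$, $s<\infty$. Coupled with the compact embedding $H^1(\Omega)\hookrightarrow\hookrightarrow L^s(\Omega)$ for $s<2^*$, which upgrades $\theta_m\rightharpoonup\theta$ to strong convergence in $L^s(\Omega)$, we obtain $p_{M,m}\to p_M$ strongly (a fortiori weakly) in $L^r(\Omega)$ for the admissible range of $r$. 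Because each of the three limits is uniquely determined by $(\mathbf{m},\xi,\pi)$, a standard subsequence-of-subsequence argument upgrades the convergences to the whole sequence.

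The main obstacle will be the identification step for $\rho_m\mathbf{u}_m$: the density is not defined by a linear PDE but through a Helmholtz decomposition followed by a nonlinear pointwise quotient, so neither the weak continuity of a differential operator nor a straightforward compactness argument suffices. The leverage comes from two facts already established earlier in the paper, namely that the irrotational part $\nabla\psi$ is fixed throughout the sequence, and that the solenoidal complement $\mathbf{z}_m$ is uniformly bounded in $\mathbf{L}^q$ with $q>n$; together with the strong $\mathbf{L}^p$ convergence of $\mathbf{u}_m$ and the uniqueness statements of Propositions \ref{p2D} and \ref{p3D}, these replace the missing compactness on $\rho_m$ itself.
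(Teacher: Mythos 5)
Your treatment of the velocity and temperature components coincides with the paper's: Proposition \ref{pum} gives $\mathbf{w}_m\rightharpoonup\mathbf{w}$ in $\mathbf{V}$, Proposition \ref{pem} gives $\theta_m\rightharpoonup\theta$ in $H^1(\Omega)$, and Corollary \ref{crho} supplies the uniform $\mathbf{L}^q$ bound on the momenta. The genuine gap is in the density-related steps. First, to invoke the uniqueness of Propositions \ref{p2D} and \ref{p3D} you must already know that the weak limit $\mathbf{N}$ of $\rho_m\mathbf{u}_m$ is of the form $\tilde\rho\,\mathbf{u}$, i.e.\ pointwise parallel to $\mathbf{u}$ and vanishing a.e.\ on $\Omega[|\mathbf{u}|=0]$. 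Each $\rho_m\mathbf{u}_m$ is parallel to $\mathbf{u}_m$, not to $\mathbf{u}$, and weak convergence of $\mathbf{z}_m$ does not transport this pointwise constraint to the limit; the parallelism on $\Omega[|\mathbf{u}|\neq 0]$ can be recovered by testing the identity $\rho_m\mathbf{u}_m\cdot\mathbf{u}_m^{\perp}=0$ against smooth functions and using $\mathbf{u}_m\to\mathbf{u}$ in $\mathbf{L}^p(\Omega)$, but nothing in your argument forces $\mathbf{N}=\mathbf{0}$ on $\Omega[|\mathbf{u}|=0]$, a set which may have positive measure. Second, and more seriously, the step ``the previous identification yields $\rho_m\to\rho$ a.e.\ \dots by the explicit quotient formula'' is not valid: in $\rho_m=(\nabla\psi+\mathbf{z}_m)\cdot\mathbf{u}_m/|\mathbf{u}_m|^2$ the term $\mathbf{z}_m$ converges only weakly in $\mathbf{L}^q(\Omega)$, and weak convergence yields no pointwise information, even along subsequences. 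That a.e.\ convergence is exactly what is needed to pass to the limit in the nonlinear truncation $T_M(\rho_m)$, so your pressure component $p_{M,m}=T_M(\rho_m)R_\mathrm{specific}\theta_m$ is left without an identified limit.

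The paper argues in the opposite order: from $\mathbf{u}_m\to\mathbf{u}$ a.e.\ it asserts, on the strength of the explicit construction of Propositions \ref{p2D} and \ref{p3D} (with $\nabla\psi$ fixed by the boundary datum), that $\rho_m\to\rho$ a.e.\ in $\Omega$; the weak convergence $\rho_m\mathbf{u}_m\rightharpoonup\rho\mathbf{u}$ in $\mathbf{L}^q(\Omega)$ then follows from the uniform bound \eqref{cotamm} combined with a.e.\ convergence, and the pressure limit $T_M(\rho_m)\theta_m\rightharpoonup T_M(\rho)\theta$ follows from \eqref{cotapm}, \eqref{tmax} and dominated convergence. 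Thus the pointwise convergence of the densities is the input of the paper's argument, not a consequence of the weak convergence of the momenta as in your plan. If you wish to avoid asserting it, you must establish strong (or a.e.) compactness of $\rho_m$, or at least of $T_M(\rho_m)$, by some other means; the Helmholtz--Weyl decomposition and uniqueness alone do not provide it.
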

 \begin{proof} 
 Let  \(\lbrace (\mathbf{m}_m,\xi_m,\pi_m)\rbrace_{m\in\mathbb{N}}\) be a sequence of  \(V\)
 weakly convergent to (\(\mathbf{m},\xi,\pi) \), namely
 \begin{align*}
\mathbf{m}_m\rightharpoonup\mathbf{m}&\mbox{ in }\mathbf{L}^q(\Omega);\\
\xi_m\rightharpoonup\xi&\mbox{ in } H^{1}(\Omega);\\
\pi_m\rightharpoonup\pi&\mbox{ in }L^r(\Omega).
\end{align*}
 
Thanks to Proposition \ref{pum}, the corresponding solutions \(\mathbf{w}_m=\mathbf{w}(\mathbf{m}_m,\xi_m,\pi_m) \in \mathbf{V} \) 
 to the problem \eqref{fluidw}\(_m\),  for each \(m\in\mathbb{N}\), 
 weakly converge to the  solution \(\mathbf{w}=\mathbf{w}(\mathbf{m},\xi,\pi)\)  to the problem \eqref{fluidw}.
  Thus, we get
 \[
\mathbf{u}_m\rightharpoonup \mathbf{u}\mbox{ in }\mathbf{H}^1(\Omega) .
\] 
Consequently, we get \(\mathbf{u}_m\rightarrow \mathbf{u}\) a.e. in \(\Omega\).
Notice that \(\mathbf{u}\) satisfies
\begin{align*}
-\int_{\Omega}  \mathbf{m}\times \mathbf{u}:\nabla \mathbf{v}\dif{x} 
+\int_{\Omega}\mu(\xi)D\mathbf{u}:D\mathbf{v} \dif{x} 
+\int_{\Omega} \lambda(\xi)\nabla\cdot\mathbf{u}\nabla\cdot\mathbf{v} \dif{x}  \\
+\int_{\Gamma}\gamma(\xi)\mathbf{u}_T\cdot \mathbf{v}_T\dif{s} 
 =\int_{\Omega}  \pi\nabla \cdot\mathbf{v}\dif{x},
\end{align*}
for all \(\mathbf{v}\in \mathbf{V}\), and the convective term verifies \eqref{skew}.

Let \(\rho_m\) be the unique solution given at Propositions \ref{p2D} and \ref{p3D},  for \(n = 2,3\), respectively. 
Then, it follows that \(\rho_m\) a.e. converges to \(\rho\) in \(\Omega\).
Thanks to Corollary  \ref{crho}, we have
\(\rho_m\mathbf{u}_m\rightharpoonup\rho\mathbf{u}\)  in \(\mathbf{L}^q(\Omega)\),
which limit satisfies \eqref{syst2}.

Thanks to Proposition \ref{pem},
 the corresponding solutions \(\theta_m=\theta(\mathbf{m}_m,\xi_m)\) 
to the problem \eqref{newtonpb}\(_m\),  for each \(m\in\mathbb{N}\),  weakly converge to the  solution
\(\theta=\theta(\mathbf{m},\xi)\) in \(H^1(\Omega)\).
Thus, \(\theta_m\) strongly converges to \(\theta\) in \(L^p(\Omega)\), for \(1<p<2^*\). Thanks to
\eqref{cotapm} and the Lebesgue dominated convergence theorem, we have
\[
T_M(\rho_m)\theta_m \rightharpoonup T_M(\rho)\theta  \mbox{ in }L^r(\Omega).
\]
Then,  the operator \( \mathcal{T}\) is weakly sequential continuous, which finishes the proof of Proposition \ref{pt1}. 
 \end{proof}
 
 Therefore, we are in condition to obtain the fixed point
  \[
  (\mathbf{m},\xi,\pi) = (\rho\mathbf{u}, \theta, p_M),
  \]
which is the required solution. 
Finally,
the argument of Proposition \ref{maxmin}, with the auxiliary problem \eqref{newtonpb} being replaced by the variational problem \eqref{heatw},
 can be applied to obtain the \(L^\infty\)-regularity of the temperature \(\theta\),
and the proof of Theorem \ref{main} is concluded.

\section{Passage to the limit as \(M\rightarrow\infty\)
({\sc Proof of Theorem \ref{main2}})}
\label{smain2}

The proof of the main result is due to compactness arguments.

Under the assumption \eqref{arho},
the solution \((\rho_M,\mathbf{u}_M,\theta_M)\)  determined in Theorem \ref{main} satisfies
\begin{align}
\|\rho_M\|_{r,\Omega}\leq \mathcal{R}; \\
\|\rho_M\mathbf{u}_M\|_{q,\Omega}\leq R_1; \label{r1}\\
\|\theta_M\|_{1,2}\leq R_2, \label{r3}
\end{align}
considering \(R_1\) and \(R_2\) from \eqref{cotamm} and \eqref{cotaeg}, respectively.
 Arguing as in \eqref{cotapm}
with \(\mathcal{R}\) replacing \(M |\Omega|^{1/r} \), we get 
\begin{equation}\label{r4}
\|p_M\|_{r,\Omega}\leq \mathcal{R} R_\mathrm{specific} \mathrm{ess}\sup_{\partial\Omega}\theta_0 :=R_4.
\end{equation}
Hence, we can extract a subsequence of \(p_M\), still labeled by \(p_M\), weakly convergent to \(p\) in \(L^r(\Omega)\).

Considering \eqref{r1} and \eqref{r4}, the estimate \eqref{cotau}  reads
\begin{align}
\min\left\lbrace\frac{n-1}{n}\mu_\#,\gamma_\#\right\rbrace \|\mathbf{w}\|_{\mathbf{V}}^2\leq \frac{n}{(n-1)\mu_\#}
\left( R_4 |\Omega|^{1/2-1/r} + R_1 \|\widetilde{\mathbf{u}}_D\|_{p,\Omega} \right.\nonumber \\ \left.
+ \mu^\#\|D\widetilde{\mathbf{u}}_D\|_{2,\Omega} 
  +\lambda^\#\|\nabla\cdot\widetilde{\mathbf{u}}_D\|_{2,\Omega}\right)^2
   +\gamma^\#\|\widetilde{\mathbf{u}}_D\|_{2,\Gamma}^2.\label{cotau4}
   \end{align}
Then, the convergences
 \begin{align*}
\rho_M\rightharpoonup \rho &\mbox{ in } L^r (\Omega); \\
\mathbf{u}_M\rightharpoonup\mathbf{u}&\mbox{ in } \mathbf{H}^1(\Omega)  ;\\
\theta_M\rightharpoonup\theta &\mbox{ in } H^{1}(\Omega),
\end{align*}
hold, as \(M\) tends to infinity.
From the above convergences, we identify the limit \[p=\rho R_\mathrm{specific}\theta.
\]

The quantitative estimates \eqref{u1}-\eqref{t1} are established from the estimates \eqref{cotau4} and \eqref{cotaeg}, respectively.
Therefore,  the proof of Theorem \ref{main2} is concluded.

\subsection*{Acknowledgements.}
This preprint is a submitted manuscript. 
The Version of Record of this
article is published in S\~ao Paulo Journal of Mathematical Sciences, and is available online at 
https://doi.org/10.1007/s40863-021-00262-z


\begin{thebibliography}{99}

 
\bibitem{amrouche}
C. Amrouche, N. Seloula,
 \(L^p-\)theory for vector potentials and Sobolev’s inequalities for vector fields. Application to the Stokes equations with
 pressure boundary conditions,
 \textit{Math. Mod. Meth. Appl. Sci.} \textbf{23}  (2013), 37-92.

\bibitem{bveiga87}
H. Beir\~ao da Veiga,
Existence results in Sobolev spaces for a stationary transport equation,
\textit{Ric. Mat.}  \textbf{36} (1987), 173-184.

\bibitem{brezina}
J. B\v{r}ezina, A. Novotn\'y,
On weak solutions of steady Navier-Stokes equations for monatomic gas,
\textit{Comment. Math. Univ. Carolin.} \textbf{49} :4 (2008),  611-632.

\bibitem{chung}
S.R. Chung, C.H. Suh, J.H. Baek, H.S. Park, Y.J. Choi, J.H. Lee,
 Safety of radiofrequency ablation of benign thyroid nodules
and recurrent thyroid cancers: a systematic review and meta-analysis, 
\textit{Int. J. Hyperthermia} \textbf{33} :8 (2017), 920-930.

\bibitem{m3as2006}
L. Consiglieri, 
{Steady-state flows of thermal viscous incompressible fluids with convective-radiation effects}, 
\textit{Math. Mod. Meth. Appl. Sci.} \textbf{16} :12 (2006), 2013-2027.
 

 \bibitem{ijpde14}
L. Consiglieri, Explicit estimates for solutions of mixed elliptic problems,
 \textit{Int. J. Partial Differential Equations} \textbf{2014}  (2014), Article ID 845760. https://doi.org/10.1155/2014/845760

\bibitem{lap2011}
L. Consiglieri, 
{\em Mathematical analysis of selected problems from fluid thermomechanics.
The  \( (p-q)\) coupled fluid-energy systems.}  Lambert Academic Publishing, Saarbr\"ucken 2011.

\bibitem{dong}
H. Dong,
       On elliptic equations in a half space or in convex wedges with irregular coefficients.
\textit{Adv. Math.} \textbf{238} (2013), 24-49.

\bibitem{sarka2009}
B. Ducomet, S. Ne\v{c}asov\'a, A. Vasseur,
On spherically symmetric motions of a viscous compressible barotropic and selfgravitating gas,
\textit{J. Math. Fluid Mech.} \textbf{13}  (2011), 191-211. 

\bibitem{dsch}
 N. Dunford, J.T. Schwartz, \textit{Linear operators, Part I}
 Interscience Publ., New York 1958.

\bibitem{fabes}
E. Fabes, M. Jodeit Jr., N. Rivi\'ere, 
Potential techniques for boundary value problems on \(C^1\) domains,
\textit{Acta Math.} \textbf{141} (1978), 165-186.

\bibitem{frehse-w}
J. Frehse, M. Steinhauer, W. Weigant,
The Dirichlet problem for steady viscous compressible flow in three dimensions,
\textit{J. Math. Pures Appl.} \textbf{97} (2012), 85-97.

\bibitem{galdi}
G.P. Galdi, C.G. Simader, 
Existence, uniqueness and \(L^q\) -estimates for the Stokes problem in an exterior domain,
\textit{Arch. Rational Mech. Anal.} \textbf{112} (1990), 291-318.

\bibitem{geng-shen}
J. Geng, Z. Shen,
 The \(L^p\) boundary value problems on Lipschitz domains, 
\textit{Adv. Math.} \textbf{216}  (2007), 212-254.

\bibitem{geng-shen10}
J. Geng, Z. Shen,
The Neumann problem and Helmholtz decomposition in convex domains,
\textit{J. Functional Analysis} \textbf{259} (2010), 2147-2164.

\bibitem{gu-ubachs}
Z. Gu, W. Ubachs.
A systematic study of Rayleigh-Brillouin scattering in air, N\(_2\), and O\(_2\) gases.
\textit{The Journal of chemical physics} \textbf{141} :10 (2014), 104320.

\bibitem{imanu}
M.D. Gunzburger, O.Y. Imanuvilov,
Optimal control of stationary, low Mach number, highly nonisothermal, viscous flows,
\textit{ESAIM Control Optim. Calc. Var.} \textbf{5} (2000), 477-500.
 
\bibitem{lae}
A. Laesecke,  R. Krauss, K. Stephan, W. Wagner,
 Transport Properties of Fluid Oxygen, \textit{J. Phys. Chem. Ref. Data} \textbf{19} :5 (1990), 1089-1122. 

\bibitem{lions}
 J.L. Lions,
 \textit{Quelques m\'ethodes de r\'esolution des probl\`emes aux limites non lin\'eaires}.
 Dunod et Gauthier-Villars, Paris 1969.

\bibitem{plions}
P.-L. Lions,
 \textit{Mathematical Topics in Fluid Mechanics, Vol. 2. Compressible Models}.
Lecture Series in Mathematics and its Applications. Clarendon Press, Oxford 1998.

 \bibitem{kmn}
 K. Kadoya, N. Matsunaga, A. Nagashima,
 Viscosity and thermal conductivity of dry air in the gaseous phase,
 \textit{J. Phys. Chem. Ref. Data} \textbf{14} :4 (1985), 947-969.
 
\bibitem{mitrea}
D. Mitrea,
Sharp \(L^p-\)Hodge decompositions for Lipschitz domains in \(\mathbb{R}^2\),
\textit{Adv. Differential Equations} \textbf{7} :3 (2002), 343-364.

\bibitem{mucha}
P.B. Mucha, M.Pokorn\'y,
Weak solutions to equations of steady compressible heat conducting fluids,
\textit{Math. Mod. Meth. Appl. Sci.} \textbf{20} :5 (2010), 785-813.

\bibitem{padula}
M.-R. Padula,
Uniqueness theorems for steady, compressible, heat-conducting fluids: bounded domains,
\textit{Atti Accad. Naz. Lincei Cl. Sci. Fis. Mat. Natur. Rend. Lincei (8)} \textbf{74} :6 (1983), 380-387.

\bibitem{plotni}
P.I. Plotnikov, E.V. Ruban, J. Sokolowski, 
Inhomogeneous boundary value problems for compressible Navier-Stokes and transport equations,
\textit{J. Math. Pures Appl.} \textbf{92} :2 (2009), 113-162.

\bibitem{plotni-w}
P.I. Plotnikov, W. Weigant,
Steady 3D viscous compressible flows with adiabatic exponent \(\gamma\in (1,\infty)\),
\textit{J. Math. Pures Appl.} \textbf{104} (2015), 58-82.

 \bibitem{radz}
 M. Radzina, V. Cantisani, M. Rauda, M.B. Nielsen, C. Ewertsen, F. D'Ambrosio, P. Prieditis, S. Sorrenti,
 Update on the role of ultrasound guided radiofrequency ablation for thyroid nodule treatment,
\textit{Int. J. Surg.} \textbf{41} (2017), 582-593.
 
 \bibitem{stru}
M. Struwe,
\emph{Variational methods. Applications to nonlinear partial differential equations and Hamiltonian systems.}
 Springer-Verlag, Berlin-Heidelberg 1990.

 \bibitem{valli}
 A.Valli,
 On the existence of stationary solutions to compressible Navier-Stokes equations,
 \textit{Ann. Inst. Henri Poincar\'e} \textbf{4} :1 (1987), 99-113.
 
\end{thebibliography}
\end{document}